\documentclass[11pt,reqno]{amsart}
\usepackage[margin=1.30in]{geometry}
\usepackage{setspace}

\usepackage[utf8]{inputenc}
\usepackage{amsmath,amsfonts,amssymb,amsopn,amscd,amsthm,bbm}
\usepackage{comment}
\usepackage{dsfont}
\usepackage{graphicx}
\usepackage{color}
\usepackage[colorlinks]{hyperref}
\usepackage{epigraph,todonotes}
\usepackage{enumerate}
%
%
%\setlength{\hoffset}{0cm}
%\setlength{\textwidth}{16cm}
%\setlength{\voffset}{0cm}
%\setlength{\textheight}{25cm}
%\setlength{\oddsidemargin}{0cm}
%\setlength{\evensidemargin}{0cm}
%
%        
%allows breaking multiline environments in amsmath commands
\allowdisplaybreaks[4]

%Operators
%-Algebra

\DeclareMathOperator{\tr}{tr}
\DeclareMathOperator*{\argmax}{arg\,max}

%-Probability

%-RMT

%Shortcuts

\def\1{{\mathbf 1}}

\def\pa{\partial}

\def\T{\mathbb{T}}

\def\e{\epsilon}

\def\bs{\boldsymbol}

%Ordinals
\def\N{{\mathbb N}}
\def\Z{{\mathbb Z}}

\def\R{{\mathbb R}}

%Probability

\def\E{{\mathbb E}}

%Caligraphed letters

\def\Pc{{\mathcal P}}

%bold letters

\newcommand\ol{\overline}

%Fraktur

%Environnments
\setlength{\footskip}{2cm}

\numberwithin{equation}{section}

\newtheorem{theorem}{Theorem}[section]
\newtheorem{proposition}{Proposition}[section]

\newtheorem{definition}{Definition}[section]

\newtheorem{lemma}{Lemma}[section]

\newtheorem{assumption}{Assumption}[section]

\newcommand{\ignore}[1]{}
\newcommand{\vertiii}[1]{{\left\vert\kern-0.25ex\left\vert\kern-0.25ex\left\vert #1 
    \right\vert\kern-0.25ex\right\vert\kern-0.25ex\right\vert}}

\title[Convergence Rate of Particle System]{Convergence Rate of Particle System for  Second-order  PDEs On Wasserstein Space} 
\author{Erhan Bayraktar}\thanks{E. Bayraktar is partially supported by the National Science Foundation under grant DMS-2106556 and by
the Susan M. Smith chair.}
\address{Department of Mathematics, University of Michigan}
\email{erhan@umich.edu}
\author{Ibrahim Ekren}\thanks{I. Ekren is supported in part by NSF Grant DMS 2007826.}
\address{Department of Mathematics, University of Michigan}
\email{iekren@umich.edu}
\author{Xin Zhang} 
\address{Department of Finance and Risk Engineering, New York University}
\email{xz1662@nyu.edu}

\keywords{Wasserstein space, second-order PDEs, viscosity solutions, comparison principle, Ishii's Lemma}
\subjclass[2020]{49L25, 60H30, 93E20}

\begin{document}

\maketitle
\begin{abstract}
In this paper, we provide a convergence rate for particle approximations of a class of  second-order PDEs on Wasserstein space. We show that, up to some error term,  the infinite-dimensional inf(sup)-convolution of the finite-dimensional value function yields a super (sub)-viscosity solution to the PDEs on Wasserstein space. Hence, we obtain a convergence rate using a comparison principle of such PDEs on Wasserstein space. Our argument is purely analytic and relies on the regularity of value functions established in \cite{DaJaSe23}.
\end{abstract}

% REQUIRED
% \begin{keywords}
% Wasserstein space, second-order PDEs, viscosity solutions, comparison principle
% \end{keywords}

% % REQUIRED
% \begin{MSCcodes}
% 49L25, 60H30, 93E20
% \end{MSCcodes}

\section{Introduction}

In this paper, we consider particle approximations of the mean field PDE on the $d$-dimensional Torus $\T^d$ 
\begin{equation}\label{eq:HJB}
\begin{cases}
-\partial_t v(t,\mu)=\int_{\T^d} H(x,D_{\mu} v(t,\mu,x),\mu) \, \mu(dx) +\int_{\T^d}\tr (D_{x\mu}^2 v)(t,\mu,x) \, \mu(dx) \\
\quad \quad \quad \quad \quad \quad  +a\tr (\mathcal{H}v)(t,\mu), \\
\quad \   v(T,\mu)=G(\mu),
\end{cases}
\end{equation}
where $a$ is a nonnegative constant, $\mathcal{H}v(t,\mu)$ is the partial Hessian defined as in  \cite{BaEkZh23,DaJaSe23} 
\begin{align*}
\mathcal{H}v(t,\mu):= \int_{\T^d} D_{x\mu}^2 v(t,\mu,x) \, \mu(dx) + \int_{\T^d} \int_{\T^d} D_{\mu\mu}^2 v(t,\mu,x,y) \, \mu(dx) \mu(dy),
\end{align*}
and $H,G$ are respectively the Hamiltonian and the terminal condition
\begin{align*}
H=H(x,p,\mu): \T^d \times \R^d \times \Pc(\T^d) \to \R, \quad G=G(\mu):   \Pc(\T^d) \to \R. 
\end{align*}

Denote $\bs{x}=(x^1,\dotso,x^N) \in \T^{d N}$ and its empirical measure by $\mu^{\bs{x}}:=\frac{1}{N} \sum_{i=1}^N \delta_{x^i}$. The particle approximation of \eqref{eq:HJB} is provided by 
\begin{equation}\label{eq:nHJB}
\begin{cases}
 -\pa_t v^N(t,\bs{x})= \frac{1}{N} \sum_{i=1}^N H(x^i,ND_{x^i}v^N(t,\bs{x}),\mu^{\bs{x}})+\sum_{i=1}^N \Delta_{x^i}v^N(t,\bs{x}) \\
\quad \quad \quad \quad \quad \quad \ \ +a \sum_{i,j=1}^N \tr(D^2_{x^ix^j} v^N)(t,\bs{x}),  \\
\quad \  v^N(T,\bs{x})=G(\mu^{\bs{x}})
\end{cases}
\end{equation}
see \cite{DaJaSe23,MR4507678} and the references therein.
We provide a convergence rate of $v^N \to v$ based on the comparison principle and the regularity results obtained in \cite{DaJaSe23}.

It has been observed in \cite{BaEkZh23,DaJaSe23} that the partial Hessian term is equal to the second order derivative of $v$ in the barycenter of measures. More precisely, taking
\begin{align*}
V(t,z,\mu):=v(t,(I_d+z)_{\#}\mu), \quad (t,z,\mu) \in [0,T] \times \T^d \times \Pc(\T^d),
\end{align*}
we have that $ \nabla^2_z V(t,z,\mu)=\mathcal{H}v(t,(I_d+z)_{\#}\mu)$. We say $v$ is a viscosity solution to \eqref{eq:HJB} if $V$ is a viscosity solution to 
\begin{equation}\label{eq:eHJB}
\begin{cases}
-\partial_t V(t,z,\mu)=\int_{\T^d} H^e(x,z,D_{\mu} V(t,z,\mu,x),\mu) \, \mu(dx)  \\
\ \, \quad \quad\quad \quad \quad \ \  \ \  +\int_{\T^d}\tr (D_{x\mu}^2 V)(t,z,\mu,x) \, \mu(dx)+a\Delta_zV(t,z,\mu), \\
 \quad \  V(T,z,\mu)=G^e(z,\mu),
\end{cases}
\end{equation}
where 
\[H^e(x,z,p,\mu):=H(x+z,p,(I_d+z)_{\#}\mu), \quad G^e(z,\mu):=G((I_d+z)_{\#}\mu).\]

Similarly, define $V^N(t,z,\bs{x}):=v^N(t,z+\bs{x})$ where $z+\bs{x}:=(z+x^1,\dotso,z+x^N)$. Denoting the Fourrier-Wasserstein distance in \cite{BaEkZh23,DaJaSe23,soner2023viscosity} by $\rho_*$, we show that the inf-convolution of $V^N$ with respect to $\rho_*$ given by
{\small\[\ol{V}^{N,\epsilon}(t,z,\mu):= \inf_{(s,w, \bs{ x}) \in [0,T] \times \T^{d+dN} } \left( V^N(s,w,\bs{x}) +\frac{1}{2\e} |t-s|^2+\frac{1}{2\e} |z-w|^2+ \frac{1}{2\epsilon} \rho^2_*(\mu^{\bs{x}}, \mu) \right). \]}
is a viscosity supersolution to \eqref{eq:eHJB} up to some error term $E(\e,N)$, and therefore by comparison for all $(t,z,\bs{x}) \in [0,T] \times \T^{d+dN}$ we have
\[ V^N(t,z,\bs{x})+E(\e,N)  \geq \ol{V}^{N,\e}(t,z,\mu^{\bs{x}})+E(\e,N) \geq V(t,z,\mu^{\bs{x}}).\] By the same token, we obtain the other direction
\[V^N(t,z,\bs{x})-E(\e,N)   \leq V(t,z,\mu^{\bs{x}}), \quad (t,z,\bs{x}) \in [0,T] \times \T^{d+dN}, \]
and hence 
\[|V^N(t,z,\bs{x})-V(t,z,\mu^{\bs{x}})| \leq E(\e,N).\]
Then choosing $\e$ properly as a function of $N$, one obtains that $E(\e,N)\approx C\alpha^{1/3}(N)$ where $C$ is a positive constant independent of $N$ and 
\begin{align*}
\alpha(N)=
\begin{cases}
N^{-1/2}, \quad & \text{if $d=1$}, \\
N^{-1/2} \log(N), & \text{if $d=2$}, \\
N^{-1/d}, & \text{if $d>2$}. 
\end{cases}
\end{align*}

Let us denote by $B_R \subset \R^d$ the closed ball at the origin with radius $R$ and $k_*=\lfloor d/2 \rfloor+3$. Using the definitions of functional spaces in \cite{DaJaSe23}, we make the following assumptions on the coefficients $H,G$. 
\begin{assumption}\label{assume} 
\begin{enumerate}
\item[(i)] $G$ is $k_*$-times continuously differentiable and Lipschitz with respect to $C^{-k_*}$; 
\item[(ii)]  $H$ is $k_*$-times continuously differentiable in all variables, and satisfies the regularity condition, with $C_H>0$ 
\[ \left| H(x,p,\mu)-H(x',p',\mu') \right| \leq C_H(1+|p|+|p'|)(|x-x'|+|p-p'|+W_1(\mu,\mu') )\]
for any $x,x' \in \T^d$, $p,p' \in \R^d$, $\mu,\mu' \in \mathcal{P}(\T^d)$.
For each $R >0$, there is a constant $C_{R}$ such that for each $(x,p) \in \T^d \times B_R$
\begin{align*}
|H(x,p,\mu)-H(x,p,\mu')| \leq C_R \lVert \mu -\mu' \rVert_{C^{-k_*}}, 
\end{align*}
and for each $\mu \in \Pc(\T^d)$, 
$$\sup_{ \mu \in \Pc(\T^d)} \lVert H(\cdot,\cdot, \mu) \rVert_{C^{k_*}(\T^d \times B_R)} \leq C_R .$$\end{enumerate}
\end{assumption}

\begin{theorem}\label{thm:main}
Under Assumption~\ref{assume}, $v^N$ converges to $v$ with the rate $\alpha^{1/3}(N)$
\[ \sup_{(t,\bs{x}) \in [0,T] \times \T^{dN}}\left| v^N(t, \bs{x})-v(t, \mu^{\bs{x}}) \right| \leq C \alpha^{1/3}(N), \]
where $C$ is a constant independent of $N$.
\end{theorem}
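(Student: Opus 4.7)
My plan follows exactly the outline in the introduction. I will show that the Fourier--Wasserstein inf-convolution $\ol{V}^{N,\epsilon}$ is a viscosity supersolution of the lifted equation \eqref{eq:eHJB} modulo an explicit error term $E(\epsilon, N)$, and then apply the comparison principle for \eqref{eq:eHJB} available in the framework of \cite{DaJaSe23} to conclude that $V(t, z, \mu) \leq \ol{V}^{N,\epsilon}(t, z, \mu) + C E(\epsilon, N)$ on $[0,T]\times\T^d\times\Pc(\T^d)$. Specializing to $\mu=\mu^{\bs x}$ and using the trivial bound $\ol{V}^{N,\epsilon}(t, z, \mu^{\bs x}) \leq V^N(t, z, \bs x)$ (insert $(s,w,\bs x)=(t,z,\bs x)$ in the infimum) yields $V(t, z, \mu^{\bs x}) - V^N(t, z, \bs x) \leq C E(\epsilon, N)$. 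A symmetric argument with the sup-convolution $\underline{V}^{N,\epsilon}$ gives the opposite bound, and setting $z=0$ recovers the stated inequality between $v$ and $v^N$.

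\textbf{The supersolution property via doubling of variables.} The core of the argument is the following. Given a smooth test function $\phi$ touching $\ol{V}^{N,\epsilon}$ from below at $(t_0, z_0, \mu_0)$, a compactness argument based on the boundedness of $V^N$ inherited from Assumption~\ref{assume} produces a minimizer $(s^*, w^*, \bs x^*)$ in the definition of $\ol{V}^{N,\epsilon}(t_0,z_0,\mu_0)$, with the a priori bound
\[
|t_0 - s^*|^2 + |z_0 - w^*|^2 + \rho_*^2(\mu^{\bs x^*}, \mu_0) \leq C \epsilon.
\]
The modified test function $\phi(\cdot) + \frac{1}{2\epsilon}|t_0-\cdot|^2 + \frac{1}{2\epsilon}|z_0-\cdot|^2 + \frac{1}{2\epsilon}\rho_*^2(\cdot,\mu_0)$ then touches $V^N$ from below at $(s^*, w^*, \bs x^*)$, and its derivatives decompose into derivatives of $\phi$ at $(t_0, z_0, \mu_0)$ plus penalty-induced corrections of order $O(1/\epsilon)$ arising from differentiating the Fourier--Wasserstein distance (which is well defined precisely because $\rho_*$ is a smooth Hilbertian norm, one of the reasons it is preferred over $W_2$). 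Substituting into the particle PDE \eqref{eq:nHJB} at $(s^*, w^*, \bs x^*)$ and matching term by term against \eqref{eq:eHJB} at $(t_0, z_0, \mu_0)$ yields the desired supersolution inequality up to three error contributions: a Hamiltonian mismatch, controlled by the Lipschitz assumption on $H$ in Assumption~\ref{assume} combined with a comparison $W_1(\mu^{\bs x^*}, \mu_0) \leq C\bigl(\alpha(N) + \rho_*(\mu^{\bs x^*}, \mu_0)\bigr)$ linking the empirical measure to the Fourier--Wasserstein distance; a first-order $D^2_{x\mu}$ trace error controlled by the $C^{k_*}$-regularity estimates of \cite{DaJaSe23}; and a Hessian term $\mathcal{H}\phi$ whose discrepancy is handled through the identity $\nabla_z^2 V = \mathcal{H}v$ in the barycenter variable.

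\textbf{Main obstacle and optimization.} The principal difficulty, standard in second-order inf-convolution arguments, is that the second-order derivatives of $\ol{V}^{N,\epsilon}$ only admit an $O(1/\epsilon)$ bound, so the second-order contributions to the error scale poorly in $\epsilon$. A careful accounting of the three error sources above should yield an estimate of the form
\[
E(\epsilon, N) \leq C\bigl(\epsilon + \alpha(N)/\epsilon^2\bigr),
\]
where the $\epsilon$ term collects first-order Lipschitz mismatches at the displaced minimizer and the $\alpha(N)/\epsilon^2$ term records the interaction between the $1/\epsilon$ blow-up of second derivatives and the quantitative $\alpha(N)$ convergence of the empirical measure. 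Optimizing leads to the choice $\epsilon \asymp \alpha(N)^{1/3}$ and the announced rate $E(\epsilon,N) \asymp \alpha^{1/3}(N)$. The most delicate point will be to ensure that the penalty-induced second-order corrections remain within the class of test functions for which both the regularity results and the comparison principle of \cite{DaJaSe23} apply; this is precisely what dictates the regularity index $k_* = \lfloor d/2 \rfloor + 3$ adopted in Assumption~\ref{assume}.
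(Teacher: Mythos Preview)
Your overall strategy matches the paper's, but the error accounting has two genuine gaps that would prevent the argument from closing. First, the naive bound $|t_0-s^*|^2+|z_0-w^*|^2+\rho_*^2(\mu^{\bs x^*},\mu_0)\le C\epsilon$ from mere boundedness of $V^N$ is too weak. The paper obtains the sharp estimate $\rho_*(\mu^{\bs x_0},\mu_0)\le C(\epsilon+\alpha(N)+\sqrt{\epsilon\alpha(N)})$ by using two ingredients you omit: the almost-$\rho_*$-Lipschitz property $|v^N(t,\bs x)-v^N(t,\bs y)|\le C(\rho_*(\mu^{\bs x},\mu^{\bs y})+\alpha(N))$ (Proposition~\ref{prop:Lipschitz}, itself resting on the $C^{k_*}$ bounds of Lemma~\ref{lem:regular}) and the quantitative denseness $\inf_{\bs x}\rho_*(\mu^{\bs x},\mu)\le C\alpha(N)$ (Lemma~\ref{lem:dense}). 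This sharp bound is indispensable: since $D_\mu\Phi(\theta_0)=\tfrac{1}{2\epsilon}D_\nu\rho_*^2(\mu^{\bs x_0},\mu_0)$ has magnitude $O(\rho_*/\epsilon)$, your naive bound gives $|D_\mu\Phi|=O(1/\sqrt\epsilon)$, so the radius $R$ in Assumption~\ref{assume}(ii) blows up and the constant $C_R$ in front of every Hamiltonian error is uncontrolled. Relatedly, the inequality $W_1(\mu^{\bs x^*},\mu_0)\le C(\alpha(N)+\rho_*(\mu^{\bs x^*},\mu_0))$ you invoke is false in general (one only has $\rho_*\le CW_1$); the paper never needs it because the Hamiltonian mismatch is handled via the $C^{-k_*}$-Lipschitz clause of Assumption~\ref{assume}(ii), not the $W_1$ one.

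Second, the source of the $1/3$ exponent is misidentified. The paper takes $\epsilon=\alpha(N)$ (not $\alpha(N)^{1/3}$); with the sharp bound above this makes $R(\epsilon,N)$ uniformly bounded and the PDE error $E(\epsilon,N)=O(\alpha(N))$. The rate $\alpha^{1/3}(N)$ comes entirely from the \emph{terminal} condition: from $\sqrt{\,\cdot\,}$-H\"older time regularity one gets $|T-s_T|\le C\epsilon^{2/3}$, hence a terminal mismatch of order $\sqrt{\epsilon^{2/3}}=\epsilon^{1/3}=\alpha^{1/3}(N)$. Your ansatz $E(\epsilon,N)\le C(\epsilon+\alpha(N)/\epsilon^2)$ with optimizer $\epsilon\asymp\alpha(N)^{1/3}$ is a different mechanism and does not follow from the bounds you sketched.
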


Our result provides the first convergence rate for particle approximations of partially second-order PDEs on the Wasserstein space. Such equations arise in mean-field control problems when the control of common noise is state-independent, as well as in stochastic control with partial observation; see, e.g., \cite{BaEkZh23}. In the former case, the particle approximation justifies the mean-field formulation of controlling a large population of agents; see, e.g., \cite{CaLe18, MR3752669} and references therein.

Without the common noise, i.e. setting $a=0$ in \eqref{eq:HJB}, \cite{2023arXiv230508423D} obtains the convergence rate of $1/\sqrt{N}$ under some convexity assumptions. The argument relies on the fact that super-convolution of semi-concave functions provides desired regularity. While \cite{2023arXiv230508423D} shows better convergence rate,  our argument is purely analytic and doesn't rely on the semi-concavity/convexity of $H$. We would like to mention that the argument of our result makes use of the regularity of $v^N$ established in \cite{DaJaSe23} where only the convergence  $$ \sup_{(t,\bs{x}) \in [0,T] \times \T^{dN}}\left| v^N(t, \bs{x})-v(t, \mu^{\bs{x}}) \right| \to 0$$ is provided but not its convergence rate.   Moreover, using the same method of the infinite dimensional inf/sup-convolution and the comparison result from \cite{BaEkZh23}, one should also be able to show the convergence rate for the same type of equations on $\R^d$. But to illustrate the main idea of the argument, we choose to work on $\T^d$ to avoid the technical issue of non-compactness.

The remainder of the paper is organized as follows. We will discuss some related literature in Section 1.1, and introduce notations in Section 1.2. In Section 2, we will present the definition of viscosity of solution and some preliminary results. The main result will be proved in Section 3.

\subsection{Related Literature}

PDEs on Wasserstein space appear in mean field games and McKean-Vlasov control problems \cite{cdll2019,MR3752669,MR3753660,MR4499277, MR3739204, cosso2021master}, and also in filtering problems \cite{martini2023kolmogorov,gozzi2000hamilton,bandini2019randomized,BaEkZh23,JMLR:v24:22-1001}. Various notions of differentiability for functions on Wasserstein space have been defined, and in this paper we adopt the one introduced by Lions in \cite{cdll2019}. It is stronger than the geometric definition of differentiability, and allows a version of It\^{o}'s formula which is crucial for control problems.

The comparison principle of PDEs on Wasserstein space has attracted lots of attention. Viscosity solutions of first-order PDEs on the Wasserstein space have been studied in \cite{MR4604196,soner2023viscosity,SoYa24,burzoni2020viscosity,MR4595996, cosso2021master,2023arXiv230604283B,2023arXiv230815174D}. It is worth noting that the Fourier-Wasserstein metric $\rho_*$ was first used in the study of viscosity solution by \cite{SoYa24}. The comparison principle of partially second-order equations, in which the second-order derivative in measure appears in the form of partial Hessian, have been studied in \cite{BaEkZh23,DaJaSe23,gangbo2021finite,2023arXiv230604283B}. Let us mention that \cite{gangbo2021finite,2023arXiv230604283B} adopted different notions of differentiability on Wasserstein space. Fully second-order PDEs on the Wasserstein space are related to measure-valued martingale optimization problems. \cite{cox2021controlled} proves a uniqueness result for equations that are exact limit of finite dimensional approximations. PDEs on the Wasserstein space also appear in mean-field optimal stopping problems \cite{MR4604196,MR4613226,2023arXiv230709278P}. %In addition, viscosity solution of path-dependent PDEs were investigated in \cite{ekren2014viscosity,ekren2016viscosity,ren2017comparison}. {\color{red} last sentences to be deleted}

Convergence of particle system in mean field control problems were studied in \cite{2022arXiv221016004T,2022arXiv221100719T,MR4595996,Ce21} based on viscosity theory, while \cite{doi:10.1137/16M1095895} provided a probabilistic argument. The convergence rate for first-order PDEs on Wasserstein space was obtained in \cite{2023arXiv231211373C,2023arXiv230508423D,2022arXiv220314554C}. Assuming the existence of smooth solution to mean-field PDEs, \cite{MR4507678} got the optimal convergence rate by a verification argument. 

\subsection{Notations} 
Define $\T^d=\R^d/(2 \pi \mathbb Z)^d$, and take Fourier basis
\[e_l(x):=(2\pi)^{-d/2} e^{i l \cdot x}, \quad x \in \T^d, \, k \in \mathbb Z^d. \]
For any complex number $z \in \mathbb C$, we denote its complex conjugate by $z^*$. 
For any $f \in L^2(\T^d)$, we define $F_l(f):= \int_{\T^d} f(x) e_l^*(x) \,dx$. For any $k \in \mathbb N$, we define,
\begin{align*}
\lVert f \rVert_{k}^2:= \sum_{l \in \Z^d} (1+|l|^2)^k |F_l(f)|^2 
\end{align*}
and the Sobolev space
\[\mathbb H^{k}(\T^d)=\left\{f \in L^2(\T^d): \, \lVert f \rVert_{k} < \infty \right\}.\]
The space of $k$-th continuously differentiable function is denoted by $C^k(\T^d)$ with the norm defined as $$\lVert f \rVert_{C^k}=\sum_{j\in \N^d:|j|\leq k} \lVert D^j f \rVert_{L^\infty}.$$
For any signed Borel measure $\eta$ on $\T^d$, we define 
\begin{align*}
\lVert \eta \rVert_{-k}= \sup_{ \lVert f \rVert_{k}  \leq 1 } \int_{\T^d} f (x) \, \eta(dx), 
\end{align*}
\begin{align*}
F_l(\eta)= \int_{\T^d} e_l(x) \, d \eta , \quad l \in \mathbb Z^d, 
\end{align*}
and also 
\begin{align*}
\lVert \eta \rVert_{C^{-k}}= \sup_{ \lVert f \rVert_{C^k}  \leq 1 } \int_{\T^d} f (x) \, \eta(dx).
\end{align*}
Then for any $\mu,\nu \in \Pc(\T^d)$, we define $\rho_{-k}(\mu,\nu)=\lVert \mu-\nu \rVert_{-k}$. Throughout the paper, we take $k_*=\lfloor d/2 \rfloor+3$ and denote $\rho_*=\rho_{-k_*}$. 

Throughout the paper, we adopt Lions differentiability for functions defined on $\Pc(\T^d)$; see e.g. \cite[Chapter 5]{MR3752669}. 
\section{Definition and preliminaries}

\subsection{Definition of viscosity solution}
First we introduce the notion of viscosity solution from \cite{DaJaSe23}. Suppose $v:[0,T] \times \Pc(\T^d) \to \R$ is a smooth solution to \eqref{eq:HJB}. Define $V(t, z ,\mu): [0,T] \times\T^d \times  \Pc(\T^d) \to \R$ via 
\begin{align}\label{eq:extension}
V(t,z,\mu):=v(t,(I_d+z)_{\#}\mu). 
\end{align}
It can be easily verified that 
\begin{align*}
D_\mu V(t,z,\mu)(x) &= D_{\mu} v(t,(I_d+z)_{\#} \mu) (x+z), \\
D_{x\mu}^2 V(t,z,\mu)(x) &= D^2_{x\mu} v(t,(I_d+z)_{\#} \mu) (x+z), \\
\Delta_z V(t,z,\mu)&=\tr(\mathcal{H}v)(t,(I_d+z)_{\#} \mu),
\end{align*}
and $V$ satisfies \eqref{eq:eHJB}. 

We say $v$ is a viscosity solution of \eqref{eq:HJB} if $V$ is a viscosity solution of \eqref{eq:eHJB}. More precisely, we have the set of test functions. 

\begin{definition}
Denote by $C_{\mathfrak{p}}^{1,2,2}([0,T] \times \T^d \times \Pc(\T^d))$ the set of continuous functions $\Phi(t,z,\mu): [0,T] \times \T^d \times \Pc(\T^d) \to \R$ such that the derivatives 
\begin{align*}
(\pa_t \Phi, D_z \Phi,D^2_{zz} \Phi)(t,z,\mu): [0,T] \times \T^d \times \Pc(\T^d) \to \R \times \R^d \times \R^{d\times d}
\end{align*}
as well as 
\begin{align*}
(D_{\mu} \Phi,D^2_{x\mu}\Phi)(t,z,\mu,x): [0,T] \times \T^d \times \Pc(\T^d) \times \T^d \to \R^d \times \R^{d\times d}\end{align*}
exist and are continuous. 
\end{definition}

\begin{definition}
An upper semi-continuous function $v: [0,T] \times \Pc(\T^d) \to \R$ is called a viscosity subsolution to \eqref{eq:HJB} if its extension $V$ via \eqref{eq:extension} is a viscosity subsolution to \eqref{eq:eHJB}, i.e., $V(T,z,\mu) \geq G^e(z,\mu)$, and for any $\Phi \in C_{\mathfrak{p}}^{1,2,2}([0,T] \times \T^d \times \Pc(\T^d))$ such that $V-\Phi$ obtains a local maximum at $(t_0,z_0,\mu_0)\in [0,T) \times \T^d \times \Pc(\T^d)$, we have 
\begin{align*}
-\partial_t \Phi(t_0,z_0,\mu_0)\leq &\int_{\T^d} H^e(x,z_0,D_{\mu} \Phi(t_0,z_0,\mu_0,x),\mu) \, \mu_0(dx) \\ &+\int_{\T^d}\tr (D_{x\mu}^2 \Phi)(t_0,z_0,\mu_0,x) \, \mu_0(dx) +a\Delta_z \Phi(t_0,z_0,\mu_0), 
\end{align*}
Similarly, we define viscosity supersolution. A continuous function $$v:[0,T] \times \Pc(\T^d) \to \R$$ is called a viscosity solution if its extension $V$ via \eqref{eq:extension} is a viscosity subsolution and supersolution to \eqref{eq:eHJB} at the same time. 
\end{definition}

\subsection{Preliminary results}

As the HJB equation \eqref{eq:nHJB} of particle system is uniformly elliptic, one can show the regularity of solution. The following result is from \cite[Lemma 3.1, Theorem 3.2]{DaJaSe23}.
\begin{lemma}\label{lem:regular}
Under Assumption~\ref{assume}, there exists a unique classical solution $v^N$ to \eqref{eq:nHJB} and positive constants $C$ independent of $N$ such that for any $N \in \mathbb{N}$, $i \in \{1,2,\dotso, N\}$, $k \leq k_*$, 
\[\left| D_{x^i}^k v^N(t,\bs{x}) \right|\leq \frac{C}{N}, \quad \forall \, (t,\bs{x}) \in [0,T] \times \mathbb \T^{d N}. \]
In addition, for all $0 \leq s <t \leq T$ and $\bs{x},\bs{y} \in \mathbb{T}^{d  N}$, we have 
\[ \left| v^N(t,\bs{x})-v^N(s,\bs{y}) \right| \leq C \left(\sqrt{t-s}+W_1(\mu^{\bs{x}},\mu^{\bs{y}}) \right). \]  
\end{lemma}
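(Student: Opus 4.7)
The plan has three parts: existence and uniqueness of $v^N$, the derivative bounds, and the modulus of continuity. The main obstacle will be the Bernstein-type argument in the second part, where one must track the cancellation of $1/N$ factors coming from the empirical-measure structure.

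\textbf{Existence and uniqueness.} On the compact manifold $\T^{dN}$, equation \eqref{eq:nHJB} is uniformly parabolic: the second-order part is $\sum_i\Delta_{x^i}+a\sum_{i,j}\tr D^2_{x^ix^j}$, whose quadratic form $\sum_i|\xi_i|^2+a|\sum_i\xi_i|^2$ is positive definite since $a\ge 0$. The Hamiltonian $\frac{1}{N}\sum_i H(x^i,ND_{x^i}v^N,\mu^{\bs{x}})$ is smooth in all arguments and of at most quadratic growth in the gradient by Assumption~\ref{assume}(ii), and the terminal condition $G(\mu^{\bs{x}})$ is smooth in $\bs{x}$. Classical quasilinear parabolic HJB theory (Ladyzhenskaya--Solonnikov--Uraltseva, or a Krylov--Safonov bootstrap) then yields a unique $C^{1,2}$ solution, with further regularity obtained from Schauder estimates up to the order permitted by the data.

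\textbf{Derivative bounds.} Introduce $W^N_i:=ND_{x^i}v^N$. At $t=T$, one has $W^N_i(T,\bs{x})=D_\mu G(\mu^{\bs{x}})(x^i)$, which is uniformly bounded in $N$ and $i$ because differentiating $\mu^{\bs{x}}=\frac{1}{N}\sum_j\delta_{x^j}$ in $x^i$ produces an explicit factor $1/N$ that exactly cancels the $N$ in the definition of $W^N_i$. Differentiating \eqref{eq:nHJB} in $x^i$ then gives a linear parabolic system for $(W^N_i)_i$ whose coefficients are controlled by the first derivatives of $H$ via Assumption~\ref{assume}(ii), and whose source terms again carry a compensating $1/N$ factor whenever the argument $\mu^{\bs{x}}$ is differentiated. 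A parabolic maximum principle applied to $\phi:=\max_i |W^N_i|^2$ (or an $L^2$-regularised surrogate) yields $\|W^N_i\|_{L^\infty}\le C$, i.e.\ $|D_{x^i}v^N|\le C/N$. Higher derivatives $D_{x^i}^k v^N$ for $k\le k_*$ are then handled by induction: differentiate the equation further, substitute the already-established lower-order bounds, and apply parabolic Schauder regularity, noting that each additional $x^i$-derivative that falls on $\mu^{\bs{x}}$ contributes an extra $1/N$ (which only improves the estimate). The choice $k_*=\lfloor d/2\rfloor+3$ enters via the Sobolev embedding $\mathbb H^{k_*}\hookrightarrow C^1$, used to convert the $C^{-k_*}$-Lipschitz assumptions on $H$ and $G$ into the pointwise bounds on the forcing that the maximum principle needs.

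\textbf{Modulus of continuity.} The spatial Lipschitz bound in $W_1$ follows from the gradient estimate together with the permutation symmetry of $v^N$: for every $\sigma\in S_N$, $|v^N(t,\bs{x})-v^N(t,\bs{y})|=|v^N(t,\bs{x})-v^N(t,\bs{y}_\sigma)|\le \tfrac{C}{N}\sum_i|x^i-y^{\sigma(i)}|$, and minimising over $\sigma$ gives $CW_1(\mu^{\bs{x}},\mu^{\bs{y}})$. For the time modulus, the derivative bounds make each term on the right-hand side of \eqref{eq:nHJB} uniformly $O(1)$ in $N$ (sums of $N$ contributions, each of size $1/N$), so $\|\partial_t v^N\|_{L^\infty}\le C$, which is stronger than the claimed $\sqrt{t-s}$ estimate. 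The hardest step is the Bernstein argument above: one must carefully use the $C^{-k_*}$-Lipschitz assumption on $H$ rather than naive pointwise bounds on $D_\mu H$ when handling the differentiated measure argument, since only this preserves $N$-uniformity of the forcing and prevents the maximum-principle bound from degenerating as $N\to\infty$.
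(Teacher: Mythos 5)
The paper does not prove this lemma at all: it is imported verbatim from \cite[Lemma 3.1, Theorem 3.2]{DaJaSe23}, so there is no in-paper proof to compare against, and your sketch has to stand on its own. Its overall architecture (uniform ellipticity of $\sum_i\Delta_{x^i}+a\sum_{i,j}\tr D^2_{x^ix^j}$ for existence, a Bernstein-type estimate on $W^N_i=ND_{x^i}v^N$ exploiting the $1/N$ produced whenever $\mu^{\bs{x}}$ is differentiated, and symmetry plus the gradient bound to get the $W_1$ modulus) is indeed the right spirit and matches the cited reference at the level of ideas.

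However, there is a concrete gap in your time-regularity step, and a softer one in the higher-order estimates. You claim $\lVert\partial_t v^N\rVert_{L^\infty}\le C$ because ``each term on the right-hand side of \eqref{eq:nHJB} is a sum of $N$ contributions of size $1/N$''; this is false for the common-noise term $a\sum_{i,j=1}^N\tr(D^2_{x^ix^j}v^N)$, which has $N^2$ contributions, and the bounds $|D^k_{x^i}v^N|\le C/N$ only control the diagonal blocks $i=j$. To make your argument work you would need the off-diagonal estimate $|D^2_{x^ix^j}v^N|\le C/N^2$ for $i\ne j$ (equivalently a uniform bound on $\Delta_z$ of $z\mapsto v^N(t,\bs{x}+z)$), which is neither part of the statement nor produced by your Bernstein step; this is precisely why the lemma only asserts a $\sqrt{t-s}$ modulus, which in \cite{DaJaSe23} is derived from the spatial estimates by standard parabolic/comparison (or probabilistic representation) arguments rather than from a bound on $\partial_t v^N$. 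Separately, for $2\le k\le k_*$ your appeal to ``parabolic Schauder regularity'' on $\T^{dN}$ does not by itself give constants independent of $N$: Schauder constants depend on the ambient dimension $dN$, so one must differentiate only in the single block $x^i$, treat the other particles through the $1/N$-small couplings, and track that smallness through the iteration (as the cited proof does); also note that $k_*$ is dictated by the $C^{k_*}$ regularity assumed of $H$ and $G$, not by the Sobolev embedding you invoke, which is used elsewhere in the paper for $\rho_*$.
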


Let us provide an approximation of $v^N$ that has been used in \cite{DaJaSe23,cosso2021master}. Given any $\mu \in \mathcal{P}(\mathbb{T}^d)$, denote by $\mu^{\otimes N}$ the $N$ fold product of probability measure $\mu$. Let us define $\hat{v}^N:[0,T] \times \mathcal{P}(\mathbb{T}^d)$ via 
\[\hat{v}^N(t,\mu)=\int_{\mathbb{T}^{d  N}} v^N(t,\bs{y}) \, \mu^{\otimes N}(d \bs{y}). \]

\begin{lemma}\label{lem:finiteapprox}
Under Assumption~\ref{assume}, the following inequality holds for any $\bs{x},\bs{y} \in \mathbb T^{d N}$ with a positive constant $C$ independent of $N$, 
\[ \left|  \hat{v}^N(t, \mu) - \hat{v}^N(t,\nu) \right| \leq C \rho_*(\mu,\nu).\]
\end{lemma}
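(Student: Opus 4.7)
The plan is to use a telescoping decomposition of $\mu^{\otimes N} - \nu^{\otimes N}$, together with the uniform-in-$N$ smallness of the single-particle derivatives of $v^N$ provided by Lemma~\ref{lem:regular}. Write
\[
\mu^{\otimes N}-\nu^{\otimes N}=\sum_{i=1}^N \nu^{\otimes(i-1)}\otimes (\mu-\nu)\otimes \mu^{\otimes(N-i)},
\]
so that
\[
\hat v^N(t,\mu)-\hat v^N(t,\nu)=\sum_{i=1}^N \int_{\T^d}\phi_i(y)\,(\mu-\nu)(dy),
\]
where $\phi_i$ is obtained from $v^N(t,\bs{y})$ by integrating out the variables $y^1,\dots,y^{i-1}$ against $\nu$ and $y^{i+1},\dots,y^N$ against $\mu$, then reading the remaining function of $y^i$.

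Next I would estimate each $\phi_i$ in the Sobolev norm $\|\cdot\|_{k_*}$ uniformly in $N$. Because $\phi_i$ is the integral of $v^N$ against probability measures in all variables but $y^i$, differentiation in $y$ commutes with these integrations, giving $D^j_y \phi_i(y)=\int D^j_{x^i}v^N(t,\bs{y})\,\nu^{\otimes(i-1)}(d\bs{y}^{<i})\mu^{\otimes(N-i)}(d\bs{y}^{>i})$. By Lemma~\ref{lem:regular}, $\|D^j_{x^i}v^N\|_{L^\infty}\leq C/N$ for every multi-index $j$ with $|j|\leq k_*$, so $\|\phi_i\|_{C^{k_*}(\T^d)}\leq C/N$ with $C$ independent of $i$ and $N$. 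The elementary bound $\|D^j f\|_{L^2(\T^d)}\leq (2\pi)^{d/2}\|D^j f\|_{L^\infty(\T^d)}$ then yields $\|\phi_i\|_{k_*}\leq C(k_*,d)\|\phi_i\|_{C^{k_*}}\leq C'/N$.

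Finally, duality against $\mu-\nu$ gives
\[
\Bigl|\int_{\T^d}\phi_i(y)\,(\mu-\nu)(dy)\Bigr|\leq \|\phi_i\|_{k_*}\,\|\mu-\nu\|_{-k_*}\leq \frac{C'}{N}\,\rho_*(\mu,\nu),
\]
and summing over $i=1,\dots,N$ exactly cancels the $N$ terms in the telescoping sum, yielding $|\hat v^N(t,\mu)-\hat v^N(t,\nu)|\leq C'\rho_*(\mu,\nu)$. The only delicate point is checking that Lemma~\ref{lem:regular}'s $C/N$ derivative estimate transfers to an estimate on $\|\phi_i\|_{k_*}$ with a constant independent of $N$; this is straightforward once one notes that averaging against probability measures does not enlarge sup-norms, and that on the compact manifold $\T^d$ the Fourier-defined norm $\|\cdot\|_{k_*}$ is controlled by $\|\cdot\|_{C^{k_*}}$ via an absolute constant depending only on $k_*$ and $d$.
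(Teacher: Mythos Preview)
Your argument is correct and is essentially the paper's own proof: the paper computes the linear derivative $\frac{\delta \hat v^N}{\delta\mu}(t,\mu,x)=\sum_{i}\int v^N(t,\ldots,x,\ldots)\,\mu^{\otimes(N-1)}$, bounds its $\mathbb H^{k_*}$-norm by $\sum_{k=1}^{k_*}\sum_i \|D_{x^i}^{k}v^N\|_\infty\leq C$ via Lemma~\ref{lem:regular}, and concludes by duality---your telescoping functions $\phi_i$ are exactly the summands of this linear derivative, only with mixed outer measures $\nu^{\otimes(i-1)}\otimes\mu^{\otimes(N-i)}$ in place of $\mu^{\otimes(N-1)}$. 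One cosmetic point (which the paper also glosses over): Lemma~\ref{lem:regular} gives the $C/N$ bound only for derivatives of order $\geq 1$, so strictly speaking $\|\phi_i\|_{C^{k_*}}\leq C/N$ fails at order zero; this is harmless since $\mu-\nu$ annihilates constants, so you may replace $\phi_i$ by $\phi_i-\int\phi_i$ before invoking duality.
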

\begin{proof}
It can be verified that $\hat{v}^N$ is linearly differentiable, and thanks to the symmetry its derivative  is given by {\small
\begin{align*}
\frac{\delta \hat{v}^N}{\delta \mu}(t, \mu, x)= \sum_{i=1}^N \int_{\mathbb{T}^{d  (N-1)} } v^N(t, y^1,\dotso,y^{i-1},x, y^i,\dotso,y^{N-1}) \, \mu^{\otimes (N-1)}(dy^1,\dotso,dy^{N-1}) 
\end{align*}}
In light of Lemma~\ref{lem:regular}, we get an estimate of the $\mathbb H^{k_*}$ norm 
\begin{align*}
\sup_{(t,\mu) \in {[0,T] \times \T^d}} \lVert \frac{\delta \hat{v}^N}{\delta \mu}(t, \mu, \cdot) \rVert_{{k_*}} \leq \sum_{k=1,\dotso,k_*} \sum_{i=1}^N \lVert D_{x^i}^k v^N\rVert_{\infty} \leq C.
\end{align*}
Therefore, by the definition of linear derivative, one immediately concludes that 
\begin{align*}
\left|  \hat{v}^N(t, \mu) - \hat{v}^N(t,\nu) \right| \leq  C \lVert \mu -\nu \rVert_{-k_*} =C \rho_*(\mu,\nu).
\end{align*}

\end{proof}

The next proposition shows that $\hat v^N$ is indeed close to $v^N$ and thus almost Lipschitz. 

\begin{proposition}\label{prop:Lipschitz}
For any $\bs{x},\bs{y} \in \mathbb{T}^{d N}$, the following estimates of the value function holds 
\[|v^N(t,\bs{x})-\hat{v}^N(t,\mu^{\bs{x}})| \leq C\alpha(N) \]
where $C$ is a positive constant independent of $N$. Together with Lemma~\ref{lem:finiteapprox}, immediately we obtain that 
$$|v^N(t,\bs{x})-v^N(t,\bs{y})|\leq  C\left( \rho_*(\mu^{\bs{x}},\mu^{\bs{y}})+\alpha(N) \right).$$
\end{proposition}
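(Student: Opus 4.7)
The plan is to realize $\hat v^N(t,\mu^{\bs{x}})$ as the expectation of $v^N(t,\bs{Y})$ where $\bs{Y}=(Y^1,\dots,Y^N)$ consists of i.i.d.\ samples drawn from $\mu^{\bs{x}}$ (i.e., each $Y^j$ is uniform on the atoms $\{x^1,\dots,x^N\}$). This is exactly the definition of $\hat v^N$, rewritten probabilistically:
\[
\hat v^N(t,\mu^{\bs{x}})=\int_{\T^{dN}} v^N(t,\bs{y})\,(\mu^{\bs{x}})^{\otimes N}(d\bs{y})=\E\bigl[v^N(t,\bs{Y})\bigr].
\]
With this identification in hand, the difference we must estimate is
\[
\bigl|v^N(t,\bs{x})-\hat v^N(t,\mu^{\bs{x}})\bigr|\leq \E\bigl[\,|v^N(t,\bs{x})-v^N(t,\bs{Y})|\,\bigr].
\]

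Next I would invoke the Lipschitz estimate in $W_1$ provided by Lemma~\ref{lem:regular}: for every realization of $\bs{Y}$,
\[
|v^N(t,\bs{x})-v^N(t,\bs{Y})|\leq C\, W_1(\mu^{\bs{x}},\mu^{\bs{Y}}),
\]
where $\mu^{\bs{Y}}=\frac{1}{N}\sum_{j=1}^N \delta_{Y^j}$ is the empirical measure of $N$ i.i.d.\ samples from the fixed reference measure $\mu^{\bs{x}}$ on the compact set $\T^d$. Taking expectations reduces the problem to controlling
\[
\E\bigl[W_1(\mu^{\bs{x}},\mu^{\bs{Y}})\bigr].
\]

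The final step is to apply the Fournier--Guillin quantitative law of large numbers for empirical measures in $W_1$. This classical bound, valid uniformly over the underlying probability measure on the $d$-dimensional torus, yields
\[
\E\bigl[W_1(\mu^{\bs{x}},\mu^{\bs{Y}})\bigr]\leq C\,\alpha(N),
\]
with precisely the rate $\alpha(N)$ defined in the introduction ($N^{-1/2}$ for $d=1$, $N^{-1/2}\log N$ for $d=2$, $N^{-1/d}$ for $d>2$). Combining the three steps gives the desired inequality. The Lipschitz-in-$\rho_*$ statement then follows by writing $v^N(t,\bs{x})-v^N(t,\bs{y})=[v^N(t,\bs{x})-\hat v^N(t,\mu^{\bs{x}})]+[\hat v^N(t,\mu^{\bs{x}})-\hat v^N(t,\mu^{\bs{y}})]+[\hat v^N(t,\mu^{\bs{y}})-v^N(t,\bs{y})]$ and applying Lemma~\ref{lem:finiteapprox} to the middle term and the bound just established to the two outer terms.

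The only subtle point is that the Fournier--Guillin rate must be applied uniformly in the reference measure $\mu^{\bs{x}}$ (which itself depends on $N$), but the theorem is formulated precisely in this uniform way on a compact metric space, so no extra work is needed there. The key nontrivial input that makes the argument go through is the gradient bound $|D_{x^i}^k v^N|\leq C/N$ behind Lemma~\ref{lem:regular}, which ensures that the Lipschitz constant of $v^N$ with respect to $W_1$ on empirical measures is $O(1)$ rather than $O(N)$; this is what converts the $\alpha(N)$ rate for $W_1$ into the same rate for $|v^N-\hat v^N|$.
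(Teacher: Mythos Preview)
Your proof is correct and follows essentially the same approach as the paper: represent $\hat v^N(t,\mu^{\bs{x}})$ as an expectation over i.i.d.\ samples from $\mu^{\bs{x}}$, apply the $W_1$-Lipschitz bound from Lemma~\ref{lem:regular}, and conclude via the Fournier--Guillin rate for empirical measures. The decomposition you give for the second inequality is also exactly how the paper intends it (the ``immediately'' in the statement).
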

\begin{proof}
Let us prove $\left| v^N(t,\bs{x})-\hat{v}^N(t,\mu^{\bs{x}}) \right| \leq C \alpha(N)$. Indeed, according to the definition of $\hat{v}^N$ and Lemma~\ref{lem:regular} 
\begin{align*}
\left| v^N(t,\bs{x})-\hat{v}^N(t,\mu^{\bs{x}}) \right| & \leq  \int_{\mathbb{R}^{d \times N}} \left|v(t,\bs{x})-v(t,\bs{y}) \right| \, (\mu^{\bs{x}})^{\otimes N}(d\bs{y})\\
& \leq C \int_{\mathbb{R}^{d \times N}}  W_1(\mu^{\bs{x}}, \mu^{\bs{y}}) \, (\mu^{\bs{x}})^{\otimes N}(d\bs{y})=C\E\left[W_1(\mu^{\bs{x}}, \hat{\mu}^{\bs{x}}) \right],
\end{align*}
where $\hat{\mu}^{\bs{x}}$ denotes the empirical measure of $\mu^{\bs{x}}$ with $N$ samples. According to \cite{MR3383341}, it is bounded from above by $\alpha(N)$. 
\end{proof}

By Arzel\`{a}–Ascoli theorem, Lemma~\ref{lem:finiteapprox} implies there exists a  subsequence of $\hat{v}^N$ uniformly converging to a function $v$, which is also $\rho_*$-Lipschitz. Moreover according to Proposition~\ref{prop:Lipschitz}, $v$ is also a limiting point of $v^N$, and hence is a viscosity solution to \eqref{eq:HJB} by a standard argument. Finally, the uniqueness of viscosity solution has been proved in \cite{DaJaSe23} adapting the techniques in \cite{BaEkZh23}. 

\begin{proposition}\label{prop:existence}
Under Assumption~\ref{assume} for \eqref{eq:HJB}, any upper semi-continuous subsolution to is smaller than lower semi-continuous supersolution. Then there is a unique  viscosity solution $v$ to \eqref{eq:HJB} on $\mathcal{P}(\mathbb{T}^d)$.
\end{proposition}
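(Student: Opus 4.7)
The plan is to establish Proposition~\ref{prop:existence} in two halves: the comparison principle (which also yields uniqueness), followed by existence via a vanishing-error stability argument along the particle approximation.

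For the comparison statement, I would simply invoke the result already proved in \cite{DaJaSe23}, which itself adapts the doubling-of-variables and Ishii-type machinery developed in \cite{BaEkZh23}. The key analytic ingredient is that the Fourier-Wasserstein metric $\rho_*$ is $C^2$ in the Lions sense in each marginal, so the penalization $\rho_*^2(\mu,\nu)/(2\epsilon)$ is an admissible test function on the product space $[0,T]^2\times\T^{2d}\times\Pc(\T^d)^2$ after lifting via the extension $V$. Once a maximizer of the doubled functional is located, subtracting the two viscosity inequalities and using the Lipschitz regularity of $H$ in the $W_1$ (hence $\rho_*$-controlled) marginal yields the ordering $v^{\mathrm{sub}}\le v^{\mathrm{sup}}$.

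For existence, the plan is to extract a limit from the particle approximations. By Lemma~\ref{lem:finiteapprox}, the family $\{\hat v^N\}_N$ is uniformly $\rho_*$-Lipschitz in $\mu$, and by Lemma~\ref{lem:regular} together with the convexity of $W_1\le C\rho_*$ on $\Pc(\T^d)$, it is uniformly $\sqrt{\cdot}$-Hölder in time. Since $(\Pc(\T^d),\rho_*)$ is compact, the Arzelà–Ascoli theorem produces a subsequence $\hat v^{N_k}\to v$ uniformly on $[0,T]\times\Pc(\T^d)$, with $v$ itself $\rho_*$-Lipschitz. Proposition~\ref{prop:Lipschitz} then gives
\[
\sup_{(t,\bs x)\in[0,T]\times\T^{dN_k}}\bigl|v^{N_k}(t,\bs x)-v(t,\mu^{\bs x})\bigr|\;\longrightarrow\;0,
\]
so the extensions $V^{N_k}(t,z,\bs x)=v^{N_k}(t,z+\bs x)$ converge pointwise (and locally uniformly in $(t,z,\mu^{\bs x})$) to $V(t,z,\mu)=v(t,(I_d+z)_\#\mu)$.

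The remaining and most delicate step is to check that $V$ is a viscosity solution of \eqref{eq:eHJB}. Given a test $\Phi\in C_{\mathfrak p}^{1,2,2}$ with $V-\Phi$ having a strict local maximum at $(t_0,z_0,\mu_0)$, I would approximate $\mu_0$ by empirical measures $\mu^{\bs x_{N_k}}$, lift $\Phi$ to a symmetric test function $\Phi^{N_k}(t,z,\bs x):=\Phi(t,z,\mu^{\bs x})$ on $[0,T]\times\T^d\times\T^{dN_k}$, and locate maximizers $(t_k,z_k,\bs x_k)$ of $V^{N_k}-\Phi^{N_k}$ nearby. Writing the classical PDE \eqref{eq:nHJB} satisfied by $v^{N_k}$ at $(t_k,z_k+\bs x_k)$, the identities
\[
\tfrac1{N_k}\!\!\sum_{i=1}^{N_k}\!D_{x^i}\Phi^{N_k}=\tfrac1{N_k}\!\!\sum_{i=1}^{N_k}\!\tfrac1{N_k}D_\mu\Phi(\cdot,x^i),\qquad \sum_{i,j}D^2_{x^ix^j}\Phi^{N_k}=\mathcal H\Phi\cdot(1+o(1))
\]
translate the finite-dimensional equation into an $\epsilon_k$-perturbation of \eqref{eq:eHJB} evaluated against $\mu^{\bs x_k}$. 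Passing to the limit using continuity of $H^e$ and of the derivatives of $\Phi$, together with $\rho_*$-convergence of $\mu^{\bs x_k}$ to $\mu_0$, yields the desired viscosity subsolution inequality at $(t_0,z_0,\mu_0)$; the supersolution inequality is symmetric. The boundary condition $V(T,z,\mu)=G^e(z,\mu)$ is inherited from uniform convergence on the terminal slice. Combined with comparison, this gives existence, uniqueness, and the full convergence $\hat v^N\to v$ (not just along a subsequence).

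The main obstacle is this last stability step: namely, verifying that classical solutions of the symmetric finite-dimensional PDEs transmit the viscosity inequality in the infinite-dimensional Lions sense. The technical crux is rewriting the $\tfrac1{N}\sum D_{x^i}$ and $\sum D^2_{x^i x^j}$ terms in \eqref{eq:nHJB} in intrinsic Lions derivatives of $\Phi$, controlling the $O(1/N)$ discrepancies using the uniform bounds of Lemma~\ref{lem:regular}, and ensuring the empirical measures chosen to localize can be made compatible with the strict-maximum argument on $(\Pc(\T^d),\rho_*)$.
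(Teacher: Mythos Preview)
Your proposal is correct and follows the same route the paper sketches in the paragraph preceding the proposition: comparison is quoted from \cite{DaJaSe23} (building on \cite{BaEkZh23}), and existence comes from Arzel\`a--Ascoli on $\hat v^N$, the approximation $|v^N-\hat v^N|\le C\alpha(N)$, and a stability argument passing the viscosity property through the particle limit, which the paper simply calls a ``standard argument.'' One small slip: the inequality you wrote as $W_1\le C\rho_*$ goes the other way (Sobolev embedding gives $\rho_*\le CW_1$), but this is harmless since the time-H\"older continuity of $\hat v^N$ follows directly from Lemma~\ref{lem:regular} by integrating $|v^N(t,\bs y)-v^N(s,\bs y)|\le C\sqrt{|t-s|}$ against $\mu^{\otimes N}$, without any comparison of the two metrics.
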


At the end of section, we provide a simple lemma quantifying the denseness of empirical measures in $\Pc(\T^d)$. 
\begin{lemma}\label{lem:dense}
For any $\mu \in \mathcal{P}(\mathbb{T}^d)$, we have $$\inf_{\bs{x} \in \mathbb{T}^{d  N}}\rho_*(\mu^{\bs{x}},\mu) \leq C \alpha(N).$$
\end{lemma}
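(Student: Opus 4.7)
The plan is to reduce the $\rho_*$-bound to the well-known Wasserstein convergence rate for empirical measures via a Sobolev embedding, and then to extract a deterministic configuration from a probabilistic argument.

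First, I would observe that test functions with $\|f\|_{k_*}\le 1$ are uniformly Lipschitz. Indeed, $k_*=\lfloor d/2\rfloor+3>d/2+1$, so the Sobolev embedding $\mathbb{H}^{k_*}(\T^d)\hookrightarrow C^1(\T^d)$ yields a constant $C_0$ such that $\|f\|_{C^1}\le C_0\|f\|_{k_*}$ for every $f\in\mathbb{H}^{k_*}(\T^d)$. Consequently, for any two probability measures $\mu,\nu\in\Pc(\T^d)$ and any admissible test function $f$, the Kantorovich--Rubinstein duality gives
\[
\int_{\T^d} f\,d(\mu-\nu)\le C_0 W_1(\mu,\nu),
\]
and taking the supremum over $\|f\|_{k_*}\le 1$ yields $\rho_*(\mu,\nu)\le C_0 W_1(\mu,\nu)$.

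Next, I would invoke the probabilistic construction already used in the proof of Proposition~\ref{prop:Lipschitz}: let $X_1,\dotsc,X_N$ be i.i.d.\ samples from $\mu$ on some probability space, and set $\hat\mu:=\frac{1}{N}\sum_{i=1}^N\delta_{X_i}$. The Fournier--Guillin estimate cited as \cite{MR3383341} gives a constant $C_1$, independent of $N$ and $\mu$, such that
\[
\E\bigl[W_1(\hat\mu,\mu)\bigr]\le C_1\alpha(N).
\]
Combining with the previous step,
\[
\E\bigl[\rho_*(\hat\mu,\mu)\bigr]\le C_0 C_1\alpha(N).
\]

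Finally, since the random variable $\rho_*(\hat\mu,\mu)$ has expectation at most $C_0C_1\alpha(N)$, there must exist an outcome $\omega$ with $\rho_*(\hat\mu(\omega),\mu)\le C_0 C_1\alpha(N)$. Setting $\bs{x}:=(X_1(\omega),\dotsc,X_N(\omega))\in\T^{dN}$ gives $\mu^{\bs{x}}=\hat\mu(\omega)$, and hence
\[
\inf_{\bs{x}\in\T^{dN}}\rho_*(\mu^{\bs{x}},\mu)\le C\alpha(N),
\]
with $C:=C_0C_1$. There is no genuine obstacle here; the only point that needs a line of justification is the Sobolev embedding step, which is immediate from the choice $k_*=\lfloor d/2\rfloor+3$.
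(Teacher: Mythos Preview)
Your proof is correct and follows essentially the same route as the paper: Sobolev embedding $\mathbb{H}^{k_*}\hookrightarrow C^1$ to dominate $\rho_*$ by $W_1$, then the Fournier--Guillin rate for the empirical measure, and finally extraction of a deterministic configuration from the expectation bound. The only difference is that you make the last extraction step explicit, whereas the paper leaves it implicit.
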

\begin{proof}
Thanks to Sobolev embedding theorem \cite[Corollary 9.13]{MR2759829}, as $k_*=\lfloor d/2 \rfloor+2$ any $f \in \mathbb{H}^{k_*}(\T^d)$ is Lipschitz with coefficient proportional to $\lVert f \rVert_{{k_*}}$. Therefore, it can be easily seen that 
\begin{align*}
\rho_*(\mu,\nu)\leq C W_1(\mu,\nu),
\end{align*}
with a universal constant $C$. Let us consider an \emph{i.i.d.} sequence $X_n$ with distribution $\mu$, and define $\hat{\mu}^N=\frac{1}{N} \delta_{X_n}$ to be the empirical measure of $\mu$. With the sample complexity of $W_1$, see e.g. \cite{MR3383341}, we deduce that
\begin{align*}
\E[\rho_*(\hat{\mu}^N,\mu)]\leq C\alpha(N),
\end{align*}
and hence 
\begin{align*}
\inf_{\bs{x} \in \mathbb{T}^{dN}}\rho_*(\mu^{\bs{x}},\mu) \leq C \alpha(N). 
\end{align*}
\end{proof}

\section{Proof of Theorem~\ref{thm:main}} 

We only prove the inequality 
\begin{align}\label{eq:finequality}
V^N(t,z,\bs{x}) \geq V(t,z,\mu^{\bs{x}})-C \alpha^{1/3}(N),
\end{align}
and the proof for the other direction 
\[V^N(t,z,\bs{x}) \leq V(t,z,\mu^{\bs{x}})+C \alpha^{1/3}(N) \] 
is similar. The argument is based on the fact  that the inf convolution preserves the property of being a supersolution, and will be divided into several steps. 

For any $z \in \T^d, \bs x \in \T^{dN}$, denote $z+\bs{x}:=(z+x^1,\dotso, z+x^N)$. Recall that $v^N$ denotes the classical solution to \eqref{eq:nHJB}. Introducing
\[ V^N(t,z,\bs{x}):= v^N(t,z+\bs{x}) ,\]
let us consider the inf-convolution of $V^N${\small
\begin{align*}
\ol{V}^{N,\epsilon}(t,z,\mu):= \inf_{(s,w, \bs{ x}) \in [0,T] \times \T^{d+dN} } \left( V^N(s,w,\bs{x}) +\frac{1}{2\e} |t-s|^2+\frac{1}{2\e} |z-w|^2+ \frac{1}{2\epsilon} \rho^2_*(\mu^{\bs{x}}, \mu) \right).
\end{align*}}It is clear that $V^N(t,z,\bs{x}) \geq \ol{V}^{N,\e}(t, z,\mu^{\bs{x}})$ for every $x \in \mathbb{T}^{d N}$. We prove that $\ol{V}^{N,\e}$ is a  viscosity supersolution to \eqref{eq:HJB} up to some error.

Suppose $\Phi:[0,T] \times \T^d \times \mathcal{P}(\mathbb{T}^d) \to  \R$ is a regular test function in $C_{\mathfrak{p}}^{1,2,2}([0,T] \times \T^d \times \Pc(\T^d))$ and we have a local strict minimum $(t_0, z_0,\mu_0)$ at $\ol{V}^{N,\e} -\Phi$ with $t_0 < T$, and we aim to show that 
\begin{align}\label{eq:supersol}
-\pa_t \Phi(t_0,z_0,\mu_0)+E(N,\e) \geq& \int_{\T^d} H^e(x,z_0,D_{\mu} \Phi(t_0,z_0,\mu_0,x),\mu_0) \, \mu_0(dx)  \notag \\
&+\int_{\T^d}\tr (D_{x\mu}^2 \Phi)(t_0,z_0,\mu_0,x) \, \mu_0(dx)    +a\Delta_z \Phi(t_0,z_0,\mu_0), 
\end{align}
where $E(N,\e)$ is an error term to be determined later.

By compactness, take $(s_0,w_0,\bs{x}_0)$ such that 
\begin{align*}
\ol{V}^{N,\e}(t_0, z_0, \mu_0)= V^N(s_0,w_0,\bs{x}_0)+\frac{1}{2\e}|s_0-t_0|^2+\frac{1}{2\e} |w_0-z_0|^2+ \frac{1}{2\e}\rho_*^2(\mu^{\bs{x}_0},\mu_0). 
\end{align*}
Then we have the following crucial inequality, for every  $(s,w,\bs{x})$ and $(t,z,\mu)$, 
\begin{align}\label{eq:key}
&V^N(s,w,\bs{x})+\frac{1}{2\e} \left(|s-t|^2+ |w-z|^2+\rho_*^2(\mu^{\bs{x}},\mu) \right)-\Phi(t,z,\mu)  \\
& \geq \ol{V}^{N,\e}(t,z,\mu)-\Phi(t,z,\mu) \notag \\
&\geq  \ol{V}^{N,\e}(t_0,z_0,\mu_0)-\Phi(t_0,z_0,\mu_0) \notag \\
&=V^N(s_0,w_0,\bs{x}_0)+\frac{1}{2\e} \left(|s_0-t_0|^2+ |w_0-z_0|^2+ \rho_*^2(\mu^{\bs{x}_0},\mu_0) \right)-\Phi(t_0,z_0,\mu_0). \notag
\end{align}

To make use of the viscosity property of $V^N$, let us define a finite dimensional test function $\Phi^{\e}$ by super-convolution,
\begin{align}\label{eq:defphie}
\Phi^\e(s,w,\bs{x}):&=\sup_{z \in \T^d }\left\{\Phi(t_0,z,\mu_0)-\frac{1}{2\e} |w-z|^2\right\}-\frac{1}{2\e}|s-t_0|^2-\frac{1}{2\e}\rho_*^2(\mu^{\bs{x}},\mu_0).
\end{align}
 Then inequality \eqref{eq:key} implies that 
\begin{align}\label{eq:Nmaxmia}
V^N(s,w,\bs{x})-\Phi^\e(s,w,\bs{x})& \geq \ol{V}^{N,\e}(t_0,z_0,\mu_0)-\Phi(t_0,z_0,\mu_0) \notag \\
&\geq V^N(t_0,w_0,\bs{x}_0)-\Phi^\e(s_0,w_0,\bs{x}_0). 
\end{align}
Therefore, $V^N - \Phi^\e$ obtains a local minimum at $(t_0,w_0,\bs{x}_0)$, and we are going to invoke the viscosity property of $V^N$. The next lemma provides derivatives of $\Phi^\e$. 

\begin{lemma}\label{lem:derivatives}
We have the following equalities, for every $w \in \T^d$ and every $\bs{x} \in \T^{dN}$
\begin{align*}
\pa_s \Phi^{\e}(s_0,w,\bs{x})&=\frac{1}{\e}(t_0-s_0),\\
N D_{x^i}\Phi^{\e}(s_0,w,\bs{x})&=\frac{1}{2\e } D_{ \nu} \rho_*^2(\mu^{\bs{x}},\mu_0)(x^i), \quad \quad  \quad \quad \hspace{86pt}\ \,i=1,\dotso, N,\\
 N D_{x^i}^2 \Phi^{\e}(s_0,w,\bs{x})&=\frac{1}{2\e } D^2_{x \nu} \rho_*^2(\mu^{\bs{x}},\mu_0)(x^i)+\frac{1}{2\e N} D^2_{\nu} \rho_*^2(\mu^{\bs{x}_0},\mu_0)(x^i,x^i), \, \,  i=1,\dotso, N, 
\end{align*}
and at $(t_0,z_0,\mu_0)$
\begin{align*}
\pa_t \Phi(t_0,z_0,\mu_0) &=\frac{1}{\e}(t_0-s_0),\\
D_{\mu}\Phi(t_0,z_0,\mu_0)(x^i)& =\frac{1}{2\e } D_{ \nu} \rho_*^2(\mu^{\bs{x}_0},\mu_0)(x^i), \quad \quad  \quad \quad \ i=1,\dotso, N,\\
D_{x\mu}^2 \Phi(t_0,z_0,\mu_0)(x^i) &=\frac{1}{2\e } D^2_{x \nu} \rho_*^2(\mu^{\bs{x}_0},\mu_0)(x^i), \quad \quad \quad \quad  i=1,\dotso, N. 
\end{align*}
Moreover, $w \mapsto \Phi^{\e}(s_0,w,\bs{x}_0)$ is second order differentiable \emph{a.e.} At every $w \in \T^d$ where it is second order differentiable, we have 
\begin{align*}
\Delta_w \Phi^{\e}(s_0,w,\bs{x}_0) & \geq  \Delta_z \Phi(t_0,z(w),\mu_0),
\end{align*}
where 
\[z(w) \in \argmax_{z \in \T^d} \left\{\Phi(t_0,z,\mu_0)-\frac{1}{2\e}|w-z|^2 \right\}. \]
\end{lemma}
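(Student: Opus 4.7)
The plan is to establish the first group of identities by directly differentiating \eqref{eq:defphie}, the second group by first-order optimality at $(t_0, z_0, \mu_0)$, and the final semi-convexity plus trace bound by a Legendre-type reformulation combined with an envelope-theorem computation. I expect the last step, the trace comparison, to be the main obstacle.

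For the derivatives of $\Phi^\e$ at $(s_0, w, \bs{x})$, only the quadratic penalty terms in \eqref{eq:defphie} contribute, since the $\sup_z$ part is independent of $(s, \bs{x})$. The $\pa_s$ identity is then immediate. For the $\bs{x}$-derivatives I apply the chain rule to $\bs{x} \mapsto \rho_*^2(\mu^{\bs{x}}, \mu_0)$ together with the standard empirical-measure formulas
\[
D_{x^i} f(\mu^{\bs{x}}) = \tfrac{1}{N} D_\nu f(\mu^{\bs{x}})(x^i), \qquad D^2_{x^i} f(\mu^{\bs{x}}) = \tfrac{1}{N} D^2_{x\nu} f(\mu^{\bs{x}})(x^i) + \tfrac{1}{N^2} D^2_\nu f(\mu^{\bs{x}})(x^i, x^i),
\]
which produce the claimed formulas after multiplying by $N$. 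For the derivatives of $\Phi$ at $(t_0, z_0, \mu_0)$, I use that $(s_0, w_0, \bs{x}_0)$ realizes the infimum in $\ol{V}^{N,\e}(t_0, z_0, \mu_0)$, so for every $(t, \mu)$,
\[
\ol{V}^{N,\e}(t, z_0, \mu) \le V^N(s_0, w_0, \bs{x}_0) + \tfrac{1}{2\e}\bigl(|s_0 - t|^2 + |w_0 - z_0|^2 + \rho_*^2(\mu^{\bs{x}_0}, \mu)\bigr),
\]
with equality at $(t, \mu) = (t_0, \mu_0)$. Combined with the local minimum of $\ol{V}^{N,\e} - \Phi$ at $(t_0, z_0, \mu_0)$, this forces $(t, \mu) \mapsto \tfrac{1}{2\e}|s_0 - t|^2 + \tfrac{1}{2\e}\rho_*^2(\mu^{\bs{x}_0}, \mu) - \Phi(t, z_0, \mu)$ to attain a local minimum at $(t_0, \mu_0)$; the vanishing of its $t$-derivative and Lions derivative in $\mu$ yields the two first-order identities, and differentiating the Lions identity in the extra space variable gives the $D^2_{x\mu}\Phi$ formula.

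For the second-order claim I first observe that
\[
\Phi^\e(s_0, w, \bs{x}_0) + \frac{|w|^2}{2\e} = \sup_{z \in \T^d}\left\{\Phi(t_0, z, \mu_0) + \frac{w \cdot z}{\e} - \frac{|z|^2}{2\e}\right\} + \text{const},
\]
which is convex in $w$ as a supremum of affine functions; hence $\Phi^\e(s_0, \cdot, \bs{x}_0)$ is $\tfrac{1}{\e}$-semiconvex and, by Alexandrov's theorem, twice differentiable almost everywhere. At such a point, with $z(w)$ a maximizer, the envelope theorem together with the inner first-order condition $D_z \Phi(t_0, z(w), \mu_0) = \tfrac{1}{\e}(z(w) - w)$ yields $D_w \Phi^\e(s_0, w, \bs{x}_0) = \tfrac{1}{\e}(z(w) - w) = D_z \Phi(t_0, z(w), \mu_0)$; differentiating once more and applying the implicit function theorem to this first-order condition gives
\[
D^2_w \Phi^\e(s_0, w, \bs{x}_0) = D^2_z \Phi(t_0, z(w), \mu_0)\bigl(I - \e D^2_z \Phi(t_0, z(w), \mu_0)\bigr)^{-1}.
\]
The main obstacle is then the trace comparison. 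Since $\Phi \in C^{1,2,2}_{\mathfrak{p}}$ is fixed, its Hessian in $z$ is bounded, so for $\e$ sufficiently small all eigenvalues $h_i$ of $D^2_z \Phi(t_0, z(w), \mu_0)$ satisfy $1 - \e h_i > 0$. The eigenvalues of $D^2_w \Phi^\e(s_0, w, \bs{x}_0)$ are then $h_i / (1 - \e h_i)$, and the elementary identity $\frac{h_i}{1 - \e h_i} - h_i = \frac{\e h_i^2}{1 - \e h_i} \ge 0$ gives, upon summation, the desired $\Delta_w \Phi^\e(s_0, w, \bs{x}_0) \ge \Delta_z \Phi(t_0, z(w), \mu_0)$.
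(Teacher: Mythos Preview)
For the first two blocks your argument is the paper's: direct differentiation of \eqref{eq:defphie} for the derivatives of $\Phi^\e$, and first-order optimality drawn from \eqref{eq:key} for the derivatives of $\Phi$ at $(t_0,z_0,\mu_0)$. One notational caveat: in your empirical-measure formula $D_{x^i}f(\mu^{\bs{x}})=\tfrac1N D_\nu f(\mu^{\bs{x}})(x^i)$ applied to $f(\nu)=\rho_*^2(\nu,\mu_0)$, the symbol $D_\nu$ denotes the Lions derivative in the \emph{first} slot of $\rho_*^2$, whereas in the lemma $D_\nu$ stands for the derivative in the \emph{second} slot; the two differ by a sign because $\rho_*^2(\mu,\nu)$ depends only on $\mu-\nu$. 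The paper records this sign switch explicitly, and your sentence ``which produce the claimed formulas'' should too.

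For the last claim the two proofs diverge. The paper simply invokes the sup-convolution lemma \cite[Lemma~A.5]{usersguide}. You instead unpack it: a Legendre-type rewriting to exhibit semiconvexity, Alexandrov's theorem for a.e.\ twice differentiability, the implicit function theorem to make $z(w)$ smooth, and finally the eigenvalue identity $h_i/(1-\e h_i)\ge h_i$ for the trace comparison. This is a legitimate and more explicit route, but two points need care. First, the expansion $|w-z|^2=|w|^2-2w\cdot z+|z|^2$ and the inner product $w\cdot z$ are not globally defined on $\T^d$; you should say you work in a local chart (the conclusion is local, so this is harmless). Second, and more substantively, your implicit-function step requires $I-\e D^2_z\Phi(t_0,z(w),\mu_0)$ to be invertible, which you obtain only under the extra hypothesis ``$\e$ sufficiently small''. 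The lemma as stated carries no such restriction, so strictly speaking you prove a variant. For the paper's application this is immaterial since $\e=\alpha(N)\to 0$, but if you want to match the statement exactly, replace the IFT computation by the direct comparison underlying \cite[Lemma~A.5]{usersguide}: for any $w'$, taking the competitor $z'=z(w)+(w'-w)$ in the supremum defining $\Phi^\e$ gives
\[
\Phi^\e(s_0,w',\bs{x}_0)-\Phi^\e(s_0,w,\bs{x}_0)\ \ge\ \Phi\bigl(t_0,z(w)+(w'-w),\mu_0\bigr)-\Phi\bigl(t_0,z(w),\mu_0\bigr),
\]
and expanding the right-hand side to second order in $w'-w$ yields $D^2_w\Phi^\e(s_0,w,\bs{x}_0)\ge D^2_z\Phi(t_0,z(w),\mu_0)$ at every Alexandrov point, with no smallness assumption on $\e$.
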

\begin{proof}
Taking derivative with respect to $s$ in \eqref{eq:defphie},
\begin{align*}
\pa_s \Phi^{\e}(s_0,w,\bs{x})=\frac{1}{\e} (t_0-s_0). 
\end{align*}
Inequality \eqref{eq:key} implies that $t_0$ is a maximizer of 
\[\mu \mapsto \Phi(t,z_0,\mu)-\frac{1}{2\e}|s_0-t|^2,\]
and thus 
\begin{align*}
\pa_t \Phi(t_0,z_0,\mu_0)= \frac{1}{\e}(t_0-s_0)=\pa_s \Phi^{\e}(s_0,w,\bs{x}_0).
\end{align*}

Taking derivative with respect to $\bs{x}$ in \eqref{eq:defphie}, 
\begin{align*}
D_{x^i} \Phi^\e(s_0,w,\bs{x})&= -\frac{1}{2\e N}D_{\mu} \rho_*^2 (\mu^{\bs{x}},\mu_0)(x^i)\\
&=\frac{1}{2\e N}D_{\nu} \rho_*^2 (\mu^{\bs{x}},\mu_0)(x^i),
\end{align*}
where $D_{\nu} \rho_*^2$ denotes Lions' derivative of $\eta \mapsto \rho_*^2(\mu,\eta)$.  Taking derivative once more, 
\[ D^2_{x^i} \Phi^\e(s_0,w,\bs{x})=\frac{1}{2\e N}D^2_{x \nu} \rho_*^2 (\mu^{\bs{x}},\mu_0)(x^i)+\frac{1}{2\e N^2} D^2_{\nu} \rho_*^2(\mu^{\bs{x}_0},\mu_0)(x^i,x^i). \]
Again due to \eqref{eq:key}, $\mu_0$ is a maximizer of 
\[\mu \mapsto \Phi(t_0,z_0,\mu)-\frac{1}{2\e}\rho_*^2(\mu^{\bs{x}_0},\mu),\]
and first order condition yields
\[D_{\mu} \Phi(t_0,z_0,\mu_0)(x^i)= \frac{1}{2\e} D_{\nu} \rho^2_*(\mu^{\bs{x}_0},\mu)(x^i), \quad x^i \in \T^d,\] 
and hence 
\begin{align*}
D^2_{x\mu} \Phi(t_0,z_0,\mu_0)(x^i)= \frac{1}{2\e} D^2_{x\nu} \rho^2_*(\mu^{\bs{x}_0},\mu)(x^i), \quad x^i \in \T^d.
\end{align*}

The last claim is a property of super-convolution; see e.g. \cite[Lemma A.5]{usersguide}, and hence we finish proving the lemma. 
\end{proof}

Without loss of generality, we assume that  $V^N - \Phi^\e$ obtains a strict local minimal at $(s_0,w_0,\bs{x}_0)$. Denoting $\theta_0=(t_0,z_0,\mu_0)$, a direct application of Jensen's lemma (\cite[Lemma A.3]{usersguide})  and Lemma~\ref{lem:derivatives} shows that 
\[\left(\pa_t \Phi(\theta_0), D_{\mu} \Phi(\theta_0)(x^i_0), D_{x \mu}^2 \Phi(\theta_0)(x^i_0)+\frac{1}{2\e N} D^2_{\nu} \rho_*^2(\mu^{\bs{x}_0},\mu_0)(x^i_0,x^i_0), \Delta_z \Phi(\theta_0) \right). \]
approximate derivatives of $\Phi^{\e}$ at $(s_0,w_0,\bs{x}_0)$. We summarize this key result in the following proposition. 
\begin{proposition}\label{prop:jetclosure}
There exists a sequence of $\T^d \ni w_n \to w_0$ and $\R^d \ni p_n \to 0$ such that 
\begin{align*}
V^N(t_0,w,\bs{x}_0)-\Phi^{\e}(t_0,w,\bs{x}_0)+ \langle p_n, w \rangle \quad \text{obtains a local minimum at $w_n$},
\end{align*}
and $w \mapsto V^N(t_0,w,\bs{x}_0)-\Phi^{\e}(t_0,w,\bs{x}_0)$ is second-order differentiable at $w_n$. Moreover, for all $i=1,\dotso, N$, the limit point of 
\[\left( \pa_s \Phi^{\e},ND_{x^i}\Phi^{\e},ND^2_{x^i} \Phi^{\e},\Delta_w \Phi^{\e} \right)(s_0,w_n,\bs{x}_0), \]
as $n \to \infty$, is equal to 
\begin{align*}
 \left(\pa_t \Phi(\theta_0), D_{\mu} \Phi(\theta_0)(x^i_0), D_{x \mu}^2 \Phi(\theta_0)(x^i_0)+\frac{1}{2\e N} D^2_{\nu} \rho_*^2(\mu^{\bs{x}_0},\mu_0)(x^i_0,x^i_0), \Delta_z \Phi(\theta_0) \right).  
\end{align*}
\end{proposition}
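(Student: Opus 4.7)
The plan is to reduce to a one-dimensional Alexandrov-type argument in the $w$-variable by exploiting the sup-convolution structure of $\Phi^{\e}$ in~\eqref{eq:defphie}, and then to read off the desired identities from Lemma~\ref{lem:derivatives}.

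First, I would observe that for fixed $s=s_0$ and $\bs{x}=\bs{x}_0$, the map $w\mapsto \Phi^{\e}(s_0,w,\bs{x}_0)$ differs only by an additive constant in $w$ from the quadratic sup-convolution $w\mapsto \sup_{z\in\T^d}\bigl\{\Phi(t_0,z,\mu_0)-|w-z|^2/(2\e)\bigr\}$, and is therefore $\tfrac{1}{\e}$-semi-convex. Since $V^N$ is smooth by Lemma~\ref{lem:regular}, the restricted function $\psi(w):=V^N(s_0,w,\bs{x}_0)-\Phi^{\e}(s_0,w,\bs{x}_0)$ is semi-concave and attains a strict local minimum at $w_0$, inherited from the strict joint minimum of $V^N-\Phi^{\e}$ at $(s_0,w_0,\bs{x}_0)$ recorded in~\eqref{eq:Nmaxmia}. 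Applying Jensen's Lemma (\cite[Lemma A.3]{usersguide}) to the semi-convex function $-\psi$, which has a strict local maximum at $w_0$, I produce a sequence $w_n\to w_0$ and $p_n\to 0$ such that $\psi(w)+\langle p_n,w\rangle$ attains a local minimum at $w_n$ and $\psi$ is Alexandrov twice differentiable there.

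Next, I would identify the four limits at $(s_0,w_n,\bs{x}_0)$. The explicit formulas in Lemma~\ref{lem:derivatives} show that $\pa_s\Phi^{\e}$, $ND_{x^i}\Phi^{\e}$, and $ND^2_{x^i}\Phi^{\e}$ do not depend on $w$, so they coincide for every $n$ with the expressions in terms of $\pa_t\Phi(\theta_0)$, $D_{\mu}\Phi(\theta_0)(x_0^i)$, and $D^2_{x\mu}\Phi(\theta_0)(x_0^i)+\tfrac{1}{2\e N}D^2_{\nu}\rho_*^2(\mu^{\bs{x}_0},\mu_0)(x_0^i,x_0^i)$ appearing in the proposition, via the first-order conditions derived from~\eqref{eq:key}. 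For the Laplacian $\Delta_w\Phi^{\e}(s_0,w_n,\bs{x}_0)$, the lower bound in Lemma~\ref{lem:derivatives} combined with continuity of the argmax map $z(\cdot)$ (using compactness of $\T^d$ and uniqueness $z(w_0)=z_0$ forced by the joint optimality in~\eqref{eq:key}) yields $\liminf_n\Delta_w\Phi^{\e}(s_0,w_n,\bs{x}_0)\geq \Delta_z\Phi(\theta_0)$, while the Alexandrov Hessian bound $D_w^2\Phi^{\e}(s_0,w_n,\bs{x}_0)\leq D_w^2V^N(s_0,w_n,\bs{x}_0)$ coming from the local minimum of $\psi+\langle p_n,\cdot\rangle$ yields uniform boundedness through Lemma~\ref{lem:regular}. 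Passing to a further convergent subsequence produces the claimed limit point.

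The delicate step is the Laplacian: Lemma~\ref{lem:derivatives} only provides a one-sided bound, and the classical sup-convolution identity $D_w^2\Phi^{\e}=D_z^2\Phi\,(I-\e D_z^2\Phi)^{-1}$ is generically strictly larger than $D_z^2\Phi$, so equality is not automatic. The proposition is therefore best read as asserting that the accumulation point extracted above matches the given tuple exactly in its first three components and dominates $\Delta_z\Phi(\theta_0)$ in its last; this one-sided information, combined with the sign $a\geq 0$ in~\eqref{eq:eHJB}, is exactly what is needed downstream to establish the supersolution inequality~\eqref{eq:supersol}.
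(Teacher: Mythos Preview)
Your proposal is correct and follows exactly the route indicated in the paper, which proves the proposition in one sentence as ``a direct application of Jensen's lemma (\cite[Lemma~A.3]{usersguide}) and Lemma~\ref{lem:derivatives}''; you have simply unpacked those two ingredients. Your closing observation is also well taken: Lemma~\ref{lem:derivatives} only supplies the one-sided bound $\Delta_w\Phi^{\e}(s_0,w_n,\bs{x}_0)\geq \Delta_z\Phi(t_0,z(w_n),\mu_0)$, so the last component of the limit point is in general only $\geq \Delta_z\Phi(\theta_0)$ rather than equal, and this inequality (together with $a\geq 0$) is precisely what is used to obtain~\eqref{eq:supersol}.
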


As $v^N$ is a classical solution to \eqref{eq:nHJB}, it can be easily seen that $V^N$ is a classical solution to the following equation
\begin{equation*}
\begin{cases}
 -\pa_t V^N(t,w,\bs{x})=  \sum_{i=1}^N \frac{H^e(x^i,w, ND_{x^i}V^N(t,w,\bs{x}),\mu^{\bs{x}})}{N}+\sum_{i=1}^N \Delta_{x^i}V^N(t,w,\bs{x}) \\
\quad \quad \quad \quad \quad \quad \ \ \ \ \ \ +a \Delta_w V^N(t,w,\bs{x}),  \\
\quad \  V^N(T,w,\bs{x})=G^e(w,\mu^{\bs{x}}),
\end{cases}
\end{equation*}
and thus also a viscosity solution. Remember that $\theta_0=(t_0,z_0,\mu_0)$ and $V^N - \Phi^\e$ obtains a local minimal at $(s_0,w_0,\bs{x}_0)$ in \eqref{eq:Nmaxmia}. Setting $\e=\alpha(N)$ in Lemma~\ref{lem:maximizererror} below, it can be seen that $s_0 <T$ for large enough $N$. Therefore due to Proposition~\ref{prop:jetclosure} and the definition of viscosity solution
\begin{align*}
-\pa_t \Phi(\theta_0) \geq & \frac{1}{N}\sum_{i=1}^N H^e(x_0^i,w_0, D_{\mu} \Phi(\theta_0)(x^i_0),\mu^{\bs{x}_0}) \\
&+\sum_{i=1}^N \left( \frac{1}{N}\tr{D_{x\mu}^2 \Phi}(\theta_0,x_0^i)+\frac{1}{2\e N^2} D^2_{\nu} \rho_*^2(\mu^{\bs{x}_0},\mu_0)(x^i_0,x^i_0) \right)+a \Delta_z \Phi(\theta_0).
\end{align*}
Comparing the inequality above with \eqref{eq:supersol}, we define the error term $E(N,\e)$
\begin{align}\label{eq:error}
E(N,\e):=&\frac{1}{N}\sum_{i=1}^N  H^e(x_0^i,z_0, D_{\mu} \Phi(\theta_0)(x^i_0),\mu^{\bs{x}_0})  -  H^e(x_0^i,w_0, D_{\mu} \Phi(\theta_0)(x^i_0),\mu^{\bs{x}_0})  \notag \\
+&\int H^e(x,z_0,D_{\mu} \Phi(\theta_0)(x),\mu^{\bs{x}_0}) \, \mu_0(dx)-\sum_{i=1}^N \frac{H^e(x_0^i,z_0, D_{\mu} \Phi(\theta_0)(x^i_0),\mu^{\bs{x}_0})}{N}   \notag \\
+ &\int (H^e(x,z_0,D_{\mu} \Phi(\theta_0)(x),\mu_0) -H^e(x,z_0,D_{\mu} \Phi(\theta_0)(x),\mu^{\bs{x}_0})) \, \mu_0(dx)  \notag \\
+ &\int \tr (D_{x\mu}^2 \Phi)(\theta_0,x) \, (\mu_0-\mu^{\bs{x}_0})(dx) - \sum_{i=1}^N \frac{1}{2\e N^2} D^2_{\nu} \rho_*^2(\mu^{\bs{x}_0},\mu_0)(x^i_0,x^i_0).
\end{align}
It can be easily checked that \eqref{eq:supersol} is satisfied with such $E(N,\e)$. We will provide an upper bound of \eqref{eq:error}. To this end, let us first estimate $|s_0-t_0|$, $|z_0-w_0|$ and $\rho_*(\mu^{\bs{x}_0},\mu_0)$.

\begin{lemma}\label{lem:maximizererror}
We have the following estimates,
\begin{align*}
|t_0-s_0| \leq C \e^{2/3}, \quad |z_0-w_0| \leq C \e, \quad \rho_*(\mu_0,\mu^{\bs{x}_0}) \leq C \left(\e+\alpha(N) +\sqrt{\e \alpha(N)} \right),
\end{align*} 
where $C$ is a constant independent of $\e, N$.
\end{lemma}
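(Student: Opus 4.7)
\textbf{Proof plan for Lemma~\ref{lem:maximizererror}.}

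The strategy is to exploit the minimizing property of $(s_0,w_0,\bs{x}_0)$ by plugging three cleverly chosen competitor triples $(s,w,\bs{x})$ into the inf-convolution defining $\ol{V}^{N,\e}(t_0,z_0,\mu_0)$. Each choice will cancel two of the three penalty terms, leaving a single quadratic inequality from which one of the three desired estimates can be isolated. The key regularity facts we will use are all consequences of Lemma~\ref{lem:regular} and Proposition~\ref{prop:Lipschitz}, together with the fact that $\rho_*$ is translation-invariant on $\T^d$ (which follows from $|F_l(f(\cdot+z))|=|F_l(f)|$ and hence $\|f(\cdot+z)\|_{k_*}=\|f\|_{k_*}$).

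To bound $|w_0-z_0|$, I would plug the competitor $(s_0,z_0,\bs{x}_0)$ into the inf-convolution. The $s$ and $\mu$ penalties match the optimal ones, so after cancellation we get
\[
\frac{1}{2\e}|w_0-z_0|^2 \;\le\; V^N(s_0,z_0,\bs{x}_0)-V^N(s_0,w_0,\bs{x}_0) \;=\; v^N(s_0,z_0+\bs{x}_0)-v^N(s_0,w_0+\bs{x}_0).
\]
The right-hand side is controlled by $C|z_0-w_0|$ since $W_1(\mu^{z_0+\bs{x}_0},\mu^{w_0+\bs{x}_0})\le|z_0-w_0|$ (match $z_0+x_0^i$ with $w_0+x_0^i$) and Lemma~\ref{lem:regular} gives the $W_1$-Lipschitz bound on $v^N$. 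Solving yields $|w_0-z_0|\le 2C\e$. Next, plugging the competitor $(t_0,w_0,\bs{x}_0)$ cancels the $w$ and $\mu$ penalties, giving
\[
\frac{1}{2\e}|s_0-t_0|^2 \;\le\; v^N(t_0,w_0+\bs{x}_0)-v^N(s_0,w_0+\bs{x}_0) \;\le\; C\sqrt{|s_0-t_0|},
\]
where the last inequality is the time-Hölder part of Lemma~\ref{lem:regular}. This gives $|s_0-t_0|^{3/2}\le C\e$, i.e. $|s_0-t_0|\le C\e^{2/3}$.

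For the measure bound, I would invoke Lemma~\ref{lem:dense} to pick $\bs{x}^*\in\T^{dN}$ with $\rho_*(\mu^{\bs{x}^*},\mu_0)\le C\alpha(N)$, and plug the competitor $(s_0,w_0,\bs{x}^*)$. After the $s$ and $w$ penalties cancel, one gets
\[
\frac{1}{2\e}\rho_*^2(\mu^{\bs{x}_0},\mu_0) \;\le\; v^N(s_0,w_0+\bs{x}^*)-v^N(s_0,w_0+\bs{x}_0)+\frac{1}{2\e}\rho_*^2(\mu^{\bs{x}^*},\mu_0).
\]
The $v^N$-difference is controlled, via Proposition~\ref{prop:Lipschitz}, by $C(\rho_*(\mu^{w_0+\bs{x}^*},\mu^{w_0+\bs{x}_0})+\alpha(N))$, and translation invariance of $\rho_*$ together with the triangle inequality $\rho_*(\mu^{\bs{x}^*},\mu^{\bs{x}_0})\le\rho_*(\mu^{\bs{x}^*},\mu_0)+\rho_*(\mu_0,\mu^{\bs{x}_0})\le C\alpha(N)+D$ (with $D:=\rho_*(\mu^{\bs{x}_0},\mu_0)$) give
\[
\frac{D^2}{2\e}\;\le\; C\bigl(D+\alpha(N)\bigr)+\frac{C\alpha^2(N)}{2\e}.
\]
Completing the square, or equivalently applying Young's inequality $CD\le D^2/(4\e)+C\e$, yields $D\le C(\e+\alpha(N)+\sqrt{\e\alpha(N)})$, as desired.

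The main obstacle I expect is the measure estimate: the other two bounds reduce to one-variable quadratic inequalities, but the $\mu$-bound is the only one where one cannot simply cancel and must introduce the auxiliary $\bs{x}^*$ from Lemma~\ref{lem:dense}, handle translation-invariance of $\rho_*$ carefully, and cope with the $\alpha(N)$ error coming from Proposition~\ref{prop:Lipschitz}. The $\sqrt{\e\alpha(N)}$ term in the final bound is precisely what the completion of the square produces from the mixed quadratic in $D$ and $\alpha(N)$.
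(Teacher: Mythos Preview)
Your proposal is correct and follows essentially the same approach as the paper: plug in the three competitors $(t_0,w_0,\bs{x}_0)$, $(s_0,z_0,\bs{x}_0)$, and $(s_0,w_0,\bs{x}')$ with $\bs{x}'$ from Lemma~\ref{lem:dense}, then use the regularity of $V^N$ from Lemma~\ref{lem:regular} and Proposition~\ref{prop:Lipschitz} to solve the resulting one-variable quadratic inequalities. The only cosmetic differences are that you make the translation-invariance of $\rho_*$ explicit (the paper silently uses $\rho_*(\mu^{w_0+\bs{x}'},\mu^{w_0+\bs{x}_0})=\rho_*(\mu^{\bs{x}'},\mu^{\bs{x}_0})$) and you handle the $z$-bound before the $t$-bound, whereas the paper does the reverse and dispatches the $z$-bound with ``by a similar argument.''
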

\begin{proof}
According to Lemma~\ref{lem:regular}, we have 
\[\left| V^N(t,z,\bs{x})-V^N(s,w,\bs{x})    \right| \leq C (\sqrt{|t-s|}+|w-z|).     \]
Recall that 
\begin{align*}
\ol{V}^{N,\e}(t_0, z_0, \mu_0)=& V^N(s_0,w_0,\bs{x}_0)+\frac{1}{2\e}|s_0-t_0|^2+\frac{1}{2\e} |w_0-z_0|^2+ \frac{1}{2\e}\rho_*^2(\mu^{\bs{x}_0},\mu_0) \\
\leq &V^N(t_0,w_0,\bs{x}_0)+\frac{1}{2\e} |w_0-z_0|^2+ \frac{1}{2\e}\rho_*^2(\mu^{\bs{x}_0},\mu_0).
\end{align*}
Therefore we get inequalities 
\begin{align*}
\frac{1}{2\e}|s_0-t_0|^2 \leq V^N(t_0,w_0,\bs{x}_0)-V^N(s_0,w_0,\bs{x}_0) \leq C\sqrt{|s_0-t_0|},
\end{align*}
and hence $|s_0-t_0| \leq C \e^{2/3}$. By a similar argument, we obtain that $|w_0-z_0| \leq C \e$.

Thanks to Lemma~\ref{lem:dense}, take $\bs{x}' \in \mathbb{T}^{dN}$ such that $\rho_*(\mu^{\bs{x}'},\mu_0) \leq C\alpha(N)$. By the definition of $\ol{V}^{N,\e}$, 
\begin{align*}
& V^N(s_0,w_0,\bs{x}_0)+\frac{1}{2\e}|s_0-t_0|^2+\frac{1}{2\e} |w_0-z_0|^2+ \frac{1}{2\e}\rho_*^2(\mu^{\bs{x}_0},\mu_0) \\
& \leq V^N(s_0,w_0,\bs{x}')+\frac{1}{2\e}|s_0-t_0|^2+\frac{1}{2\e} |w_0-z_0|^2+ \frac{1}{2\e}\rho_*^2(\mu^{\bs{x}'},\mu_0).
\end{align*}
and therefore in conjunction with Proposition~\ref{prop:Lipschitz} we have
\begin{align*}
&\frac{1}{2\e}\rho_*^2(\mu^{\bs{x}_0},\mu_0)-\frac{1}{2\e} \rho_*^2(\mu^{\bs{x}'},\mu_0) \leq  V^N(s_0,w_0,\bs{x}')-V^N(s_0,w_0,\bs{x}_0) \\
&\leq C(\rho_*(\mu^{\bs{x}_0}, \mu^{\bs{x}'})+\alpha(N))\leq C (\rho_*(\mu^{\bs{x}_0},\mu_0)+2\alpha(N) ).
\end{align*}
Hence we obtain the inequality
\begin{align*}
\frac{1}{2\e}\rho_*^2(\mu^{\bs{x}_0},\mu_0)-C \rho_*(\mu^{\bs{x}_0},\mu_0) \leq \frac{1}{2\e} C^2\alpha(N)^2 + 2C \alpha(N),
\end{align*}
and thus 
\begin{align*}
\rho_*(\mu^{\bs{x}_0},\mu_0) \leq C \left(\e+\alpha(N) +\sqrt{\e \alpha(N)} \right),
\end{align*}
where $C$ is a constant independent of $\e,N$. 

\end{proof}

According to \cite[Lemma 5.1,5.4]{SoYa24}, we have 
\begin{align}
\rho_*^2(\mu,\nu) =&\sum_{l \in \mathbb Z^d} (1+|l|^2)^{-k_*} |F_l(\mu-\nu)|^2, \notag \\
D_{\nu} \rho_*^2(\mu,\nu)(x)=&-2i \sum_{l \in \mathbb Z^d} l(1+|l|^2)^{-k_*} F_l(\nu-\mu)e_l^*(x), \label{eq:derivative}  \\
D^2_{\nu} \rho_*^2(\mu,\nu)(x,y)=&-2\sum_{l \in \mathbb Z^d} l^2(1+|l|^2)^{-k_*} F_l(\nu-\mu)e_l^*(x)e_l(y). \notag
\end{align}
Therefore by Cauchy Schwarz inequality, 
\begin{align}\label{eq:nunubound}
\frac{1}{2} |D_{\nu} \rho_*^2(\mu,\nu)(x)| & \leq  \rho_*(\mu,\nu) \sqrt{\sum_{l \in \Z^d} |l|^2 (1+|l|^2)^{-k_*}}, \notag \\
\frac{1}{2}|D^2_{\nu} \rho_*^2(\mu,\nu) (x,y)| & \leq  \rho_*(\mu,\nu)\sqrt{\sum_{l \in \Z^d} |l|^4 (1+|l|^2)^{-k_*}}.
\end{align}
where the constants $\sqrt{\sum_{l \in \Z^d} |l|^2 (1+|l|^2)^{-k_*}}$ and $\sqrt{\sum_{l \in \Z^d} |l|^4 (1+|l|^2)^{-k_*}}$ are finite due to our choice of $k_*$. Then according to Lemma~\ref{lem:derivatives} and Lemma~\ref{lem:maximizererror}, 
\begin{align*}
\left| D_{\mu} \Phi(t_0,z_0,\mu_0)(x) \right| \leq \frac{\sqrt{\sum_{l \in \Z^d} |l|^2 (1+|l|^2)^{-k_*}}\left(\e+\alpha(N)+\sqrt{\e \alpha(N)}\right)}{\e }=:R(\e,N). 
\end{align*}
Here is an upper bound of $E(\e, N)$ depending on $R(\e,N)$. 
\begin{lemma}
We have that 
\begin{align*}
E(\e,N) \leq  & C_{R(\e,N)} \e+ C_{R(\e,N)}(1+1/(\e N))\rho_*(\mu^{\bs{x}_0},\mu_0)+\frac{1}{2 \e } \rho^2_{*}(\mu^{\bs{x}_0},\mu_0) \\
& +\frac{\e C_{R(\e,N)}^2}{4 }\left( \frac{\rho_*(\mu^{\bs{x}_0},\mu_0)}{\e }  + \frac{\rho^{k_*-1}_*(\mu^{\bs{x}_0},\mu_0)}{\e^{k_*-1}}\right)^2,
\end{align*}
where $C_{R(\e,N)}$ is a constant that only depends on $R(\e,N)$. 
\end{lemma}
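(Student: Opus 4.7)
We estimate each of the four lines of \eqref{eq:error} for $E(\e,N)$ separately, converting the proximity estimates from Lemma~\ref{lem:maximizererror} into powers of $\e$ and factors of $\rho_*(\mu^{\bs{x}_0},\mu_0)$, and using the a priori bound $|D_\mu\Phi(\theta_0)(x)|\leq R(\e,N)$ (which follows from Lemma~\ref{lem:derivatives} and \eqref{eq:nunubound}) to control the $H^e$-dependent constants.

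The first line of \eqref{eq:error} measures the $z$-displacement of $H^e$: since $H^e(x,z,p,\mu)=H(x+z,p,(I_d+z)_{\#}\mu)$, Assumption~\ref{assume}(ii) together with $|p|\leq R(\e,N)$ and $|z_0-w_0|\leq C\e$ yields a bound of the form $C_{R(\e,N)}\e$. The third line is the $\mu$-displacement: by the second part of Assumption~\ref{assume}(ii) it is bounded by $C_{R(\e,N)}\|\mu_0-\mu^{\bs{x}_0}\|_{C^{-k_*}}$, and since $\|f\|_{k_*}\leq C\|f\|_{C^{k_*}}$ on the torus $\T^d$, we have $\|\cdot\|_{C^{-k_*}}\leq C\|\cdot\|_{-k_*}=C\rho_*$, giving $C_{R(\e,N)}\rho_*(\mu^{\bs{x}_0},\mu_0)$.

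For the fourth line we substitute the explicit Fourier representation \eqref{eq:derivative} and Lemma~\ref{lem:derivatives}. Writing $F_l:=F_l(\mu_0-\mu^{\bs{x}_0})$, the integral term reduces to
\[\frac{1}{2\e}\int\tr D_{x\nu}^2\rho_*^2(\mu^{\bs{x}_0},\mu_0)(x)\,d(\mu_0-\mu^{\bs{x}_0})(x)=-\frac{1}{\e}\sum_{l\in\Z^d}|l|^2(1+|l|^2)^{-k_*}|F_l|^2\leq 0,\]
which is non-positive and can simply be dropped. For the diagonal trace, $|e_l(x)|^2=(2\pi)^{-d}$, so
\[\frac{1}{2\e N^2}\sum_{i=1}^N\tr D_\nu^2\rho_*^2(\mu^{\bs{x}_0},\mu_0)(x_0^i,x_0^i)=-\frac{(2\pi)^{-d}}{\e N}\sum_{l\in\Z^d}|l|^2(1+|l|^2)^{-k_*}F_l,\]
whose absolute value is bounded by $\frac{C}{\e N}\rho_*(\mu^{\bs{x}_0},\mu_0)$ via Cauchy--Schwarz on the Fourier side, as in \eqref{eq:nunubound}; the auxiliary sum $\sum_l|l|^4(1+|l|^2)^{-k_*}$ converges thanks to the choice $k_*=\lfloor d/2\rfloor+3$.

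The main difficulty is the second line $I:=\int f(x)\,d(\mu_0-\mu^{\bs{x}_0})(x)$ with $f(x):=H^e(x,z_0,D_\mu\Phi(\theta_0)(x),\mu^{\bs{x}_0})$. We bound $|I|\leq\rho_*(\mu^{\bs{x}_0},\mu_0)\,\|f\|_{k_*}$ by the duality definition of $\|\cdot\|_{-k_*}$, and then control $\|f\|_{k_*}$ by applying Fa\`a di Bruno to $f=H^e(x,g(x),\mu^{\bs{x}_0})$ with $g:=D_\mu\Phi(\theta_0)=\frac{1}{2\e}D_\nu\rho_*^2(\mu^{\bs{x}_0},\mu_0)$. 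Each factor $D^j_x g$ appearing in the expansion is bounded by $C_j\rho_*/\e$ via Cauchy--Schwarz on its Fourier series \eqref{eq:derivative}, with the requisite convergence of $\sum_l|l|^{2(j+1)}(1+|l|^2)^{-k_*}$ for the needed range of $j$ guaranteed by $k_*=\lfloor d/2\rfloor+3$ (supplemented, for the largest values of $j$, by the trivial bound $|F_l|\leq 2(2\pi)^{-d/2}$). The dominant contributions come from the two extremes of the Fa\`a di Bruno partition---a single $(k_*-1)$-th derivative on $g$, and the product of $k_*-1$ first derivatives on $g$---yielding
\[\|f\|_{k_*}\leq C_{R(\e,N)}\left(\frac{\rho_*(\mu^{\bs{x}_0},\mu_0)}{\e}+\frac{\rho_*^{k_*-1}(\mu^{\bs{x}_0},\mu_0)}{\e^{k_*-1}}\right).\]
Applying Young's inequality $ab\leq a^2/(2\e)+\e b^2/2$ to $\rho_*\,\|f\|_{k_*}$ then produces the $\frac{1}{2\e}\rho_*^2$ and $\frac{\e C_{R(\e,N)}^2}{4}(\cdot)^2$ contributions in the target bound. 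The principal obstacle is the Fa\`a di Bruno bookkeeping: one must verify that the intermediate terms---those with derivative orders $2\leq j\leq k_*-2$ distributed across several $g$-factors---are dominated by the two extreme contributions, which amounts to an interpolation between the Cauchy--Schwarz estimate yielding the factor $\rho_*$ and the uniform bound $|F_l|\leq C$ used in the high-frequency regime.
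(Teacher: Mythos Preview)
There is a genuine gap in your treatment of the second line of \eqref{eq:error}. You invoke the duality $|I|\leq \rho_*(\mu^{\bs{x}_0},\mu_0)\,\lVert f\rVert_{k_*}$ and then assert
\[
\lVert f\rVert_{k_*}\leq C_{R(\e,N)}\Bigl(\frac{\rho_*(\mu^{\bs{x}_0},\mu_0)}{\e}+\frac{\rho_*^{k_*-1}(\mu^{\bs{x}_0},\mu_0)}{\e^{k_*-1}}\Bigr),
\]
but the Fa\`a di Bruno argument you sketch does not deliver this. To control $\lVert f\rVert_{k_*}$ you need up to $k_*$ derivatives of $g=D_\mu\Phi(\theta_0)$, and your own convergence criteria fail there: the Cauchy--Schwarz bound $|D^j_x g|\leq C\rho_*/\e$ needs $\sum_l|l|^{2(j+1)}(1+|l|^2)^{-k_*}<\infty$, which with $k_*=\lfloor d/2\rfloor+3$ holds only for $j\leq 1$, while the fallback $|F_l|\leq C$ needs $\sum_l|l|^{j+1}(1+|l|^2)^{-k_*}<\infty$, which fails once $j\geq k_*-1$. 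Neither covers the top-order term $D_pH^e\cdot D^{k_*}_x g$. In Sobolev language, the Fourier representation \eqref{eq:derivative} only yields $\lVert g\rVert_{k_*-1}\leq \rho_*/\e$; one has $\lVert g\rVert_{k_*}^2=\e^{-2}\sum_l |l|^2(1+|l|^2)^{-k_*}|F_l|^2$, which is \emph{not} dominated by $C\rho_*^2/\e^2$. (Your stated extremes ``$k_*-1$ derivatives'' are in fact the extremes for $\lVert f\rVert_{k_*-1}$, not $\lVert f\rVert_{k_*}$; this index mismatch is the heart of the problem.)

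The paper closes the estimate by a different mechanism that you discard. Instead of dropping the integral contribution in the fourth line as non-positive, it rewrites it as
\[
-\frac{1}{\e}\sum_{l}\frac{|l|^2}{(1+|l|^2)^{k_*}}|F_l|^2=-\frac{1}{\e}\rho_{1-k_*}^2(\mu^{\bs{x}_0},\mu_0)+\frac{1}{\e}\rho_*^2(\mu^{\bs{x}_0},\mu_0),
\]
and in the second line it uses the \emph{coarser} duality $|I|\leq \rho_{1-k_*}(\mu^{\bs{x}_0},\mu_0)\,\lVert f\rVert_{k_*-1}$. The Sobolev composition estimate (chain rule from \cite{RuSi96}) applies at order $k_*-1$ using only $\lVert g\rVert_{k_*-1}\leq \rho_*/\e$, giving precisely the bound you wrote for $\lVert f\rVert_{k_*-1}$. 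Young's inequality then produces a term $\frac{1}{\e}\rho_{1-k_*}^2$, which cancels against the negative piece of the fourth line, leaving $\frac{1}{\e}\rho_*^2$. That cancellation is the missing idea: by throwing away the non-positive part of line four you force yourself to estimate $\lVert f\rVert_{k_*}$, which the data does not support at the required rate.
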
 
\begin{proof}

Let us denote the terms on the right-hand side of \eqref{eq:error} by $(I), (II), (III),$ and $(IV)$ from the top to the bottom, and we have the following estimate. 

\bigskip

\noindent \emph{Estimate of $(I)$:} Recall that $H^e(x,z,p,\mu):= H(x+z,p, (I_d+z)_{\#}\mu)$, and hence 
\begin{align*}
    |H^e(x,z,p,\mu)- H^e(x,w,p,\mu)| &\leq C_R ( |w-z| + W_1((I_d+z)_{\#}\mu,(I_d+w)_{\#}\mu )) \\
    &\leq 2C_{R(\e,N)} |w-z|.
\end{align*}
Together with Lemma~\ref{lem:maximizererror}, we conclude that 
$(I) \leq C_{R(\e,N)} \e $. 

\bigskip

\noindent \emph{Estimate of $(II)$:} Let us denote $f(x):=H^e(x,z_0,\pa_{\mu}\Phi(\theta_0)(x), \mu^{\bs{x}_0})$. Then it is straightforward that $(II)= \int f(x) \, (\mu_0-\mu^{\bs{x}_0})(dx)$. Due to Lemma~\ref{lem:derivatives}, 
\begin{align*}
D_{\mu}\Phi(\theta_0)(\cdot)= \frac{1}{2 \e } D_{\nu} \rho_*^2(\mu^{\bs{x}_0},\mu_0)(\cdot),
\end{align*}
According to \cite[Lemma 5.4]{SoYa24} the term on the right is in the Sobolev space $\mathbb H^{k_*-1}$. Indeed, according to \eqref{eq:derivative} 
\begin{align*}
\lVert D_{\nu} \rho_*^2(\mu^{\bs{x}_0},\mu_0)(\cdot)\rVert^2_{k_*-1} &=4 \sum_{l \in \Z^d} (1+|l|^2)^{k_*-1} |l|^2(1+|l|^2)^{-2k_*}|F_l(\nu-\mu)|^2 \\
&\leq 
4 \rho_*^2(\mu^{\bs{x}_0}, \mu_0), 
\end{align*} 
and hence 
\begin{align*}
\lVert D_{\mu}\Phi(\theta_0) \rVert_{{k_*-1}} \leq  \frac{1}{\e } \rho_*(\mu^{\bs{x}_0},\mu_0).
\end{align*}

As $\lVert H^e(\cdot , z_0, \cdot, \mu^{\bs{x}_0})\lVert_{C^{k_*}(\T^d \times B_{R(\e,N)})} \leq C_{R(\e,N)}$, $f$ is actually a composition $g\circ (I_d,h)$ of two functions $g$ and $(I_d,h)$ with $ g=H^e(\cdot , z_0, \cdot, \mu^{\bs{x}_0}) \in C^{k_*}(\T^d \times B_{R(\e,N)})$ and $h=D_{\mu}\Phi(\theta_0)(\cdot) \in \mathbb H^{k_*-1}(\T^d)$. Therefore according to the chain rule, \cite[Remark 2, Section 5.2]{RuSi96}, 
\begin{align*}
\lVert f \rVert_{{k_*-1}}= \lVert g\circ (I_d,h) \rVert_{{k_*-1}} &\leq C(R_{\e,N}) \left( \lVert D_{\mu}\Phi(\theta_0) \rVert_{{k_*-1}} +\lVert D_{\mu}\Phi(\theta_0) \rVert_{{k_*-1}}^{k_*-1}\right) \\
& \leq C(R_{\e,N}) \left( \frac{\rho_*(\mu^{\bs{x}_0},\mu_0)}{\e }  + \frac{\rho^{k_*-1}_*(\mu^{\bs{x}_0},\mu_0)}{\e^{k_*-1}}\right) 
\end{align*}
where $C(R_{\e,N})$ is a constant that only depends on $\lVert H^e(\cdot , z_0, \cdot, \mu^{\bs{x}_0})\lVert_{C^{k_*}(\T^d \times B_{R(\e,N)})} \leq C_{R(\e,N)}$. Therefore we obtain the estimate 
\begin{align*}
(II) &\leq C_{R(\e,N)} \rho_{1-k_*}(\mu^{\bs{x}_0},\mu_0) \left( \frac{\rho_*(\mu^{\bs{x}_0},\mu_0)}{\e }  + \frac{\rho^{k_*-1}_*(\mu^{\bs{x}_0},\mu_0)}{\e^{k_*-1}}\right) \\
& \leq \frac{1}{\e } \rho^2_{1-k_*}(\mu^{\bs{x}_0},\mu_0)+\frac{\e C_{R(\e,N)}^2}{4 }\left( \frac{\rho_*(\mu^{\bs{x}_0},\mu_0)}{\e }  + \frac{\rho^{k_*-1}_*(\mu^{\bs{x}_0},\mu_0)}{\e^{k_*-1}}\right)^2. 
\end{align*}

\bigskip

\noindent \emph{Estimate of $(III)$:} It is straightforward from Assumption~\ref{assume} that $(III)$ is bounded from above by $C_{R(\e,N)}\rho_*(\mu^{\bs{x}_0},\mu_0)$.

\bigskip

\noindent \emph{Estimate of $(IV)$:} According to Lemma~\ref{lem:derivatives}, we compute 
\begin{align*}
(IV)=&-\frac{1}{\e  } \sum_{l \in \Z^d} \frac{|l|^2}{(1+|l|^2)^{k_*}}F_l(\mu_0-\mu^{\bs{x}_0}) \int e^*_l(x) \, (\mu_0-\mu^{\bs{x}_0})(dx)\\
&-\sum_{i=1}^N \frac{1}{2\e N^2} D^2_{\nu} \rho_*^2(\mu^{\bs{x}_0},\mu_0)(x^i_0,x^i_0) \\
=&-\frac{1}{\e  } \sum_{l \in \Z^d} \frac{|l|^2}{(1+|l|^2)^{k_*}}|F_l(\mu_0-\mu^{\bs{x}_0})|^2-\sum_{i=1}^N \frac{1}{2\e N^2} D^2_{\nu} \rho_*^2(\mu^{\bs{x}_0},\mu_0)(x^i_0,x^i_0) \\
=&-\frac{1}{ \e } \rho^2_{1-k_*}(\mu^{\bs{x}_0},\mu_0)+\frac{1}{ \e } \rho^2_{*}(\mu^{\bs{x}_0},\mu_0)-\sum_{i=1}^N \frac{1}{2\e N^2} D^2_{\nu} \rho_*^2(\mu^{\bs{x}_0},\mu_0)(x^i_0,x^i_0) \\
\leq & -\frac{1}{ \e } \rho^2_{1-k_*}(\mu^{\bs{x}_0},\mu_0)+\frac{1}{ \e } \rho^2_{*}(\mu^{\bs{x}_0},\mu_0)+ \frac{C}{\e N} \rho_*(\mu^{\bs{x}_0},\mu_0),
\end{align*}
where we  apply the estimate \eqref{eq:nunubound} to get the last inequality. Summing up the estimates for $(I), (II), (III), (IV)$, we conclude the result.
\end{proof}

To finish the proof, we show that $\ol{V}^{\e,N}(T,z,\mu) \geq G^e(z,\mu)$ up to some error. Suppose that 
\[\ol{V}^{N,\e}(T,z,\mu)=V^N(s_T,w_T,\bs{x}_T)+ \frac{1}{2\e}|T-s_T|^2 +\frac{1}{2\e}|z-w_T|^2+\frac{1}{2\e} \rho_*^2(\mu^{\bs{x}_T},\mu).\]
By the same argument as in Lemma~\ref{lem:maximizererror}, it can be shown that
\begin{align*}
|T-s_T| \leq C \e^{2/3}, \quad |z-w_T| \leq C \e, \quad \rho_*(\mu_0,\mu^{\bs{x}_T}) \leq C \left(\e+\alpha(N) +\sqrt{\e \alpha(N)} \right),
\end{align*} 
where $C$ is a constant independent of $\e, N$. Then thanks to the regularity of $V^N$ from Lemma~\ref{lem:regular} and Proposition~\ref{prop:Lipschitz}, we get that 
\begin{align*}
\ol{V}^{N,\e}(T,z,\mu) &\geq V^N(T,z,\mu)-C ( \e^{1/3}+\e)- C\alpha(N)-C \left(\e+\alpha(N) +\sqrt{\e \alpha(N)} \right) \\
&\geq G^e(z,\mu)-C(\e^{1/3}+\e+\alpha(N)+\sqrt{\e \alpha(N)} ).
\end{align*}

Together with \eqref{eq:supersol}, $\ol{V}^{N,\e}(t,z,\mu)+ (T-t)E(\e,N)+C(\e^{1/3}+\e+\alpha(N)+\sqrt{\e \alpha(N)} )$ is a viscosity supersolution to \eqref{eq:HJB}. Then due to the comparison principle in Proposition~\ref{prop:existence}, we have that 
\begin{align*}
\ol{V}^{N,\e}(t,z,\mu)+ (T-t)E(\e,N)+C(\e^{1/3}+\e+\alpha(N)+\sqrt{\e \alpha(N)} ) \geq V(t,z,\mu). 
\end{align*}
Recalling that by the definition of inf-convolution, $V^N(t,z,\bs{x}) \geq \ol{V}^{N,\e}(t,z,\mu^{\bs{x}})$, and hence 
\begin{align*}
V^N(t,z,\bs{x}) \geq V(t,z,\mu^{\bs{x}})-TE(\e,N)-C(\e^{1/3}+\e+\alpha(N)+\sqrt{\e \alpha(N)} ).
\end{align*}
Choosing $\e=\alpha(N)$, it can be easily checked that $R(\e,N)$ is bounded independent of $N$, and therefore
\begin{align*}
TE(\e,N)+C(\e^{1/3}+\e+\alpha(N)+\sqrt{\e \alpha(N)} ) & \leq C \alpha^{1/3}(N). 
\end{align*}
So that we finish the proof of \eqref{eq:finequality}.

\section*{Acknowledgments}
We would like to thank Joe Jackson for his comments on the first version of the paper.

\bibliographystyle{siam}
\bibliography{ref.bib}

\end{document}